\pgfplotsset{compat=1.15}
\newcommand{\R}{\mathbb{R}}
\newcounter{number}
{\begin{list}%
{{\bf ~Proposition~\arabic{number}.}}{
\usecounter{number}
\setlength{\labelwidth}{0in}
\setlength{\leftmargin}{0in}
\setlength{\rightmargin}{0in}
\setlength{\itemsep}{1em}}}%
{\end{list}}
\newcommand{\solution}[1]{}
\newcommand{\reals}{{\mathbb R}}
\newcommand{\norm}[1]{ \left\|#1\right\|}
\newcommand{\fnorm}[1]{\norm{#1}_F}
\newcommand{\fnormbig}[1]{\big\| #1 \big\|_F}
\newcommand{\inner}[2]{\langle #1, #2 \rangle}
\DeclareMathOperator*{\argmax}{arg\,max}
\DeclareMathOperator*{\argmin}{arg\,min}
\DeclareMathOperator{\diag}{diag}
\DeclareMathOperator{\offdiag}{offdiag}
\DeclareMathOperator{\Span}{span}
\DeclareMathOperator{\vectorize}{vec}
\newcommand{\pesudoinverse}{\ssymbol{2}}
\def\@fnsymbol#1{\ensuremath{\ifcase#1\or *\or \dagger\or \ddagger\or
   \mathsection\or \mathparagraph\or \|\or **\or \dagger\dagger
   \or \ddagger\ddagger \else\@ctrerr\fi}}
\newcommand{\ssymbol}[1]{^{\@fnsymbol{#1}}}
\newcommand{\Normal}[2]{\mathcal{N}(#1, #2)}
  \newcommand{\Acal}{{\mathcal A}}
  \newcommand{\Ocal}{{\mathcal O}}
   \newcommand{\Prob}{\mathrm{Prob}}   
\newcommand{\suppress}[1]{}
\newtheorem{theorem}{Theorem}
\newtheorem{lemma}[theorem]{Lemma}
\newtheorem{remark}[theorem]{Remark}
\newtheorem{definition}[theorem]{Definition}
\newtheorem*{remark*}{Remark}
\newtheorem*{example*}{Example}
\numberwithin{equation}{section}
\DeclareMathOperator{\OB}{\mathcal{OB}}
\title{A randomized algorithm for simultaneously diagonalizing symmetric matrices by congruence}
\author{Haoze He\footnotemark[1] \and Daniel Kressner\footnote{The work of both authors was supported by the SNSF research project \emph{Probabilistic methods for joint and
singular eigenvalue problems}, grant number: 200021L\_192049. \'Ecole Polytechnique F\'ed\'erale de Lausanne (EPFL), Institute of Mathematics, 1015 Lausanne, Switzerland. E-mails: \href{mailto:haoze.he@epfl.ch}{haoze.he@epfl.ch},  \href{mailto:daniel.kressner@epfl.ch}{daniel.kressner@epfl.ch}}}
\begin{document}

\maketitle
\begin{abstract}
A family of symmetric matrices $A_1,\ldots, A_d$ is SDC (simultaneous diagonalization by congruence, also called non-orthogonal joint diagonalization) if there is an invertible matrix $X$ such that every $X^T A_k X$ is diagonal. In this work, a novel randomized SDC (RSDC) algorithm is proposed that reduces SDC to a generalized eigenvalue problem by considering two (random) linear combinations of the family. We establish exact recovery: RSDC achieves diagonalization with probability $1$ if the family is exactly SDC. Under a mild regularity assumption, robust recovery is also established: Given a family that is $\epsilon$-close to SDC then RSDC diagonalizes, with high probability, the family up to an error of norm $\Ocal(\epsilon)$. Under a positive definiteness assumption, which often holds in applications, stronger results are established, including a bound on the condition number of the transformation matrix. For practical use, we suggest to combine RSDC with an optimization algorithm. The performance of the resulting method is verified for synthetic data, image separation and EEG analysis tasks. It turns out that our newly developed method outperforms existing optimization-based methods in terms of efficiency while achieving a comparable level of accuracy.
\end{abstract}

\pagestyle{myheadings}
\thispagestyle{plain}

\section{Introduction}

A family of real symmetric $n\times n$ matrices $(A_1,\ldots, A_d)$ is called \emph{simultaneously diagonalizable by congruence} (SDC) if there is an invertible matrix $X$ such that $X^T A_k X$ is diagonal~\cite{Bustamante2020,jiang16,LeNguyen2022}. Under the stronger assumption that $X$ is orthogonal, such a family is usually called jointly diagonalizable (JD), which is equivalent to assuming that it is commuting. In the signal processing community, it is more common to refer to SDC as non-orthogonal joint diagonalization~\cite{Ablin19,absil06,Afsari08,Bouchard18, BSSsurvey}.
In this work,  we extend our previous study~\cite{hekressner2024randomized} on randomized methods for JD to SDC.

SDC problems arise in a wide range of applications. Classically, in Blind Source Separation (BSS)~\cite{BSSsurvey} signal reconstruction is performed by applying SDC to covariance matrices associated with the observed noisy signals. It also plays a crucial role in robust Canonical Polyadic (CP) decomposition of tensors, see~\cite[Algorithm 3.1]{delathauwer06} and  Section~\ref{subsec:cpd} below. Recent applications of SDC include transfer learning \cite{Zhang22}, remote sensing~\cite{Khachatrian2021}, and quadratic programming \cite{jiang16,LeNguyen2022}.

Due to noise caused by, e.g., modeling, estimation or round-off errors, the SDC assumption is virtually never satisfied in practice. Instead, one encounters a family $(\tilde A_1, \ldots, \tilde A_d) = ( A_1+ \epsilon E_1, \ldots,  A_d+\epsilon E_d)$ that is \emph{nearly SDC}, that is, the members $A_k$ of an underlying (unknown) SDC family are perturbed by error matrices $\epsilon E_k$. 
Most existing algorithms for addressing such SDC problems proceed by considering an optimization problem of the form
\begin{equation}
\label{eq:optimization_problem}
    \min_{\tilde X \text{ invertible}} \mathcal{L}(\tilde X),
\end{equation}
where $\mathcal{L}(\tilde X)$ is a suitable loss function measuring off-diagonality for each transformed matrix $\tilde X^T \tilde A_k \tilde X$. A natural choice is
\begin{equation}
    \label{eq:off_diagonal_loss}
    \mathcal{L}(\tilde X) := \sum_{k=1}^{d} \big\|\offdiag(\tilde X^T \tilde A_k \tilde X) \big\|^2_F,
\end{equation}
where $\offdiag(\cdot)$ sets the diagonal entries of a matrix to zero and $\|\cdot\|_F$ denotes the Frobenius norm.
Various optimization methods, including a quasi-Newton method~\cite{Ziehe03} and an alternating Gauss-Newton iteration~\cite{uwedge} have been applied to this loss function. One obvious issue when working with~\eqref{eq:off_diagonal_loss} is that $\mathcal{L}(\tilde X)$ can be made arbitrarily small by simply rescaling $\tilde X$. A popular strategy to bypass this problem is to additionally impose the constraint that $\tilde X$ is in the oblique manifold $\OB(n,n)$, that is, each column of $\tilde X$ has Euclidean norm $1$. For example in~\cite{uwedge}, the columns of $\tilde X$ are renormalized after each iteration.
Exploiting that $\OB(n,n)$ forms a Riemannian manifold, one can apply Riemannian optimization techniques~\cite{AbsMahSep2008,boumal2023intromanifolds} to SDC, such as Riemannian trust region \cite{absil06}, Riemannian BFGS \cite{Bouchard20} and Riemannian conjugate gradient~\cite{urdaneta2018RCGJD, bosner2023} methods. 

When $\tilde A_k$ is not only symmetric but also positive definite, a popular choice~\cite{Pham2001} of the loss function is given by
\begin{equation}
    \label{eq:pham_loss}
    \mathcal{L}(\tilde{X}) := \frac{1}{2n}\sum_{k = 1}^{d}[\log \det\diag(\tilde{X}^T\tilde{A}_k\tilde{X}) - \log\det(\tilde{X}^T\tilde{A}_k\tilde{X})],
\end{equation}
where $\diag(A) := A - \offdiag(A)$. As this loss function is invariant under column scaling~\cite{Pham2001}, no additional constraint needs to be imposed on $\tilde{X}$. In many signal processing applications~\cite{BSSsurvey}, the positive definite assumption is implied by the fact that $\tilde A_k$ is a sample covariance matrix. In this situation, the loss function~\eqref{eq:pham_loss} can be interpreted as the KL-divergence between zero-mean multivariate Gaussian distributions with covariance matrices $\tilde{X}^T\tilde{A}_k\tilde{X}$ and $\diag(\tilde{X}^T\tilde{A}_k\tilde{X})$~\cite{Bouchard18}. In the original paper~\cite{Pham2001}, the optimization problem \eqref{eq:optimization_problem} with loss function~\eqref{eq:pham_loss} is solved by a Jacobi-like method, successively applying invertible $2 \times 2$ transformations acting on pairs of columns. In \cite{Ablin19}, a quasi-Newton method is proposed that uses an approximate Hessian of~\eqref{eq:pham_loss}. In~\cite{Bouchard20}, a Riemannian optimization algorithm that optimizes~\eqref{eq:pham_loss} directly on the manifold of invertible matrices is developed and analyzed.

Optimization-based SDC algorithms enjoy two major advantages: Their convergence analysis inherits the theory of the underlying optimization algorithms~\cite{Ziehe03, uwedge} and they can be easily modified to suit more specific applications~\cite{uwedge,Pfister2019}. However, to the best of our knowledge, none of the existing methods is guaranteed to converge to the global minimum of~\eqref{eq:optimization_problem}. Moreover, as observed in~\cite{hekressner2024randomized}, optimization-based algorithms are often significantly slower than methods that utilize well-tuned linear algebra software.

In this paper, we propose and analyze a novel randomized SDC (RSDC) algorithm that is not only simpler but often also significantly faster than optimization-based SDC algorithms. 
The core concept is simple: Similar to existing methods for JD~\cite{hekressner2024randomized,Ehler19}, for learning latent variable models~\cite{pmlr-v23-anandkumar12, anandkumar14, Anandkumar15}, for CP decomposition (see Section~\ref{sec:pencilbased}
 below) and for joint Schur decomposition~\cite{Corless1997}, we obtain $\tilde X$ by diagonalizing two random linear combinations of the matrices in the family. In contrast, existing work~\cite{Evert22,delathauwer06} on SDC in the context of the  CP decomposition utilizes two fixed matrices from the family. RSDC is straightforward to implement and exploits existing high-quality software for generalized eigenvalue problems implemented in, e.g., LAPACK~\cite{LAPACK,Kragstrom2006,Steel2023}. Moreover, unlike optimization-based methods, RSDC comes with recovery guarantees: Given a family that is (exactly) SDC, RSDC returns, with probability one, a matrix $X$ that transforms the family to diagonal form by congruence. Given a well-behaved (the precise meaning will be clarified in Section \ref{sec:preliminaries}) family that is nearly SDC, RSDC returns after column normalization, with high probability, an error~\eqref{eq:off_diagonal_loss} of $\Ocal(\epsilon)$. Furthermore, the accuracy can be further improved by refining the output of RSDC with the quasi-Newton method from~\cite{Ziehe03}.

\subsection{Connection with partially symmetric CP decomposition}\label{subsec:cpd}

Joint decompositions for matrix families are closely related to tensor decompositions. To explain this connection, let $\mathsf{A} \in \reals^{n\times n \times d}$ denote the tensor that has the  matrices of the family $(A_1,\ldots,A_d)$ as frontal slices, that is,
$\mathsf{A}(i,j,k) = A_k(i,j)$. Because each $A_k$ is symmetric, the 
tensor $\mathsf{A}$ is \textit{partially symmetric}, with symmetry in the first two modes \cite{carroll1970analysis,Kolda09}. Such a tensor is also called \textit{SFS} (symmetric frontal slices); see~\cite{DomanovDeLathauwer15,Evert23}.

For some $r\in \mathbb N$, the (general) CP decomposition aims at representing a tensor as a sum of $r$ rank-$1$ terms. The SFS-CP decomposition~\cite{DomanovDeLathauwer15} (also known as partially symmetric CP decomposition \cite[Section 5]{Kolda09} or INDSCAL~\cite{DomanovDeLathauwer15}) of an SFS-tensor $\mathsf{A}$ additionally requires that each rank-$1$ term inherits the partial symmetry:
\begin{equation} \label{eq:sfscp}
\mathsf{A} = \sum_{i=1}^{r} a_i \otimes a_i \otimes b_i, \quad a_i \in \reals^n, \quad b_i \in \reals^d,
\end{equation}
where $\otimes$ denotes the \emph{tensor product}. 

Suppose that the family is SDC, that is, $A_k = V D_k V^T$ for an invertible matrix $V \in \reals^{n \times n }$ and diagonal matrices $D_k \in \reals^{n \times n}$, $k = 1,\ldots,d$. Then the corresponding tensor $\mathsf{A}$ satisfies the SFS-CP decomposition
\[\mathsf{A} = \sum_{i=1}^{n} v_i \otimes v_i \otimes d_i\]
where  $v_i \in \reals^{n}$ denotes the $i$th column of $V$ and $d_i = [D_1(i,i), \ldots,D_d(i,i)]^T \in \reals^d$. Thus, SDC induces an SFS-CP decomposition~\eqref{eq:sfscp} with $r = n$. Vice versa, an SFS-CP decomposition~\eqref{eq:sfscp} of $\mathsf{A}$ with $r = n$ and invertible $[a_1,\ldots,a_n]$ implies that the frontal slices of $\mathsf{A}$ are SDC.


Both the CP decomposition and the SFS-CP decomposition~\eqref{eq:sfscp} with $r = n$ terms
are unique for generic $n\times n\times d$ tensors up to reordering and scaling, provided that $n,d\ge 2$; see~\cite{DomanovDeLathauwer13, DomanovDeLathauwer15}. 
On the algorithmic side, (approximate) SFS-CP decompositions can be computed using methods such as Gauss-Newton INDSCAL~\cite{GaussNewtonINDSCAL} or semi-definite relaxation~\cite{Nili22}.
These methods are not tailored to SDC, as they allow for $r > n$ terms and do not aim at ensuring the invertibility of $V$ when $r = n$.

\subsection{Pencil-based CP decomposition and its instability}
\label{sec:pencilbased}

As mentioned above, our proposed RSDC (Algorithm~\ref{alg:RSDC}) proceeds by taking two random linear combinations of the matrix family. Similar techniques have been proposed for computing the general CP decomposition of a tensor $\mathsf{A} \in \reals^{n \times n \times d}$. Such ``pencil-based" algorithms~\cite{Pencilbased} proceed by also taking two linear combinations of the frontal slices and then solving the generalized eigenvalue problem associated with this matrix pencil. For example, a suitably reordered generalized Schur decomposition of two fixed frontal slices is computed in \cite{Evert22}, whereas two input-dependent linear combinations of frontal slices are used in~\cite{DomanovDeLathauwer14, DomanovDeLathauwer17}. In~\cite{Evert22Subpace},  multiple pairs of linear combinations of frontal slices are used to enhance the robustness of the algorithm, including the use of random linear combinations.

In~\cite{Pencilbased}, the numerical stability of such pencil-based algorithms has been questioned by analyzing the condition number of a (unique) CP decomposition in terms of its  factor matrices~\cite{CondJoinMap}. Although the results in~\cite{Pencilbased} are stated for general CP decompositions, it is not unlikely that the arguments in~\cite{Pencilbased} carry over to SFS-CP decompositions and, hence, to SDC matrix families.
 
 First, Theorem 6.1 in~\cite{Pencilbased} shows that for any pair of linear combinations, there exists an adversarial input $\mathsf{A}\in \reals^{n\times n \times d}$ with $d \ge r+2$ such that the following holds: The condition number of the generalized eigenvalue problem (associated with those linear combinations of the frontal slices of $\mathsf A$) is arbitrarily larger than the condition number of the CP decomposition. Thus, any pencil-based algorithm that uses \emph{fixed} linear combinations can be defeated numerically, highlighting the importance of using random linear combinations.

 Second, Theorem 1.4 in~\cite{Pencilbased}
 considers random tensors in~$\reals^{n \times n \times d}$ that are constructed as a CP decomposition with $r$ rank-$1$ terms such that each term has fixed, arbitrary vectors in the first two modes and independent Gaussian random vectors in the third mode. It is shown that condition number for the CP decomposition of such tensors grows, with high probability, at least proportionally with $r^{2/(d-1)}$ as $r$ increases.
 As linear combinations of frontal slices do not change the distribution of the vectors in the third mode, this result indicates that using only two such linear combinations may have an unfavourable impact on the condition number. 

It is worth stressing that the condition number studied in \cite{Pencilbased,CondJoinMap} considers the \emph{forward} error. In the context of SDC, this translates into analyzing the impact of small perturbations of the matrix family on the invertible matrix $X$ that effects the diagonalization by congruence. In this paper, we perform a \emph{backward error} analysis
for the approximate matrix $\tilde X$ returned by RSDC (Algorithm~\ref{alg:RSDC}), by quantifying the error of the approximate diagonalization effected by $\tilde X$. This error can be much smaller than the norm of $\tilde X - X$. Nevertheless, and possibly by coincidence, our main result (Theorem~\ref{thm:pd_prob_bound}) features an amplification factor $n^2$ that matches the 
CP decomposition condition number of the random tensor mentioned above for $r = n$ and $d = 2$.
When $n$ is moderate and the level of noise remains low, our result guarantees that RSDC produces reasonably accurate results (with high probability). For large $n$ and/or a high noise level,
we will demonstrate \emph{numerically} how the potential numerical instability of RSDC can be mitigated by combining RSDC with optimization techniques.  Specifically, our numerical experiments in Section~\ref{sec:numerical_experiments} show that using the output from RSDC as a starting point of FFDIAG~\cite{Ziehe03} yields good accuracy and efficiency. It remains an open problem to theoretically analyze the quality of the output from RSDC as an initial guess for an optimization-based algorithm like FFDIAG.

\subsection{Organization}
The rest of this paper is organized as follows: In Section \ref{sec:preliminaries}, preliminaries about matrix pairs and families will be covered. In Section \ref{sec:exact_case}, the Randomized SDC (RSDC) algorithm is introduced and its exact recovery is established. In Section~\ref{sec:robust_recovery}, the robust recovery of RSDC is established under certain regularity and positive definiteness assumptions. Section~\ref{sec:implementation_details} covers important implementation details as well as the refinement of the output returned by RSDC. Section~\ref{sec:numerical_experiments} showcases the accuracy and efficiency of our algorithms through extensive numerical experiments, comparing with several state-of-the-art SDC solvers on both synthetic data and applications.

\section{Preliminaries}\label{sec:preliminaries}


In this section, we discuss the basic properties of matrix families in the context of SDC.

For $d=2$, a matrix family becomes a matrix pair $(A,B) \in \reals^{n \times n} \times  \reals^{n \times n}$, closely associated with the generalized eigenvalue problem~\cite{matrixcomputation,StewartSun1990} for the matrix pencil $A -  \lambda B$. Such a pair is called \emph{regular} if the polynomial $\lambda \mapsto \det(A -  \lambda B)$ does not vanish. 
If a symmetric matrix pair $(A,B)$ is regular \emph{and} SDC then its Weierstrass canonical form~\cite[Chapter VI, Theorem 1.13]{StewartSun1990} is always diagonal. This implies that regularity is not sufficient to ensure the SDC property for a symmetric matrix pair $(A,B)$. For example, consider 
\begin{equation} \label{example:nonsdc}
A = \begin{bmatrix}
    0 & 1 \\
    1 & \epsilon
\end{bmatrix}, \quad B = \begin{bmatrix}
    0 & 1 \\
    1 & 0
\end{bmatrix}, \quad \epsilon \neq 0.\end{equation}
 If there was an invertible matrix $X$ such that $X^T A X = D_A$, $X^T B X = D_B$ are diagonal, $X^{-1}B^{-1}A X = D_B^{-1}D_A$ is diagonal. However,
 \[B^{-1}A = \begin{bmatrix} 1 & \epsilon \\ 0 & 1 \end{bmatrix}\] which is clearly not diagonalizable and leads to a contradiction. Thus $(A,B)$ is not SDC. The example~\eqref{example:nonsdc} also shows that arbitrarily small perturbations can destroy the SDC property, as the pair is clearly SDC for $\epsilon = 0$.

Let us now consider a family of $d \ge 2$ matrices: $A_1, \ldots, A_d \in \R^{n\times n}$. Given a vector of scalars 
$\mu \in \R^d$, we define
\[A(\mu):=\mu_1 A_1 + \ldots + \mu_d A_d.\]
In the following, we will now extend certain concepts, such as regularity, from matrix pairs to matrix families.

\begin{definition}\label{def:regular_family}
    A family $(A_1,\ldots,A_d)$ with $A_k \in \reals^{n \times n}$ is called \emph{regular} if $\mu \mapsto \det( A(\mu) )$ does not vanish.
\end{definition}
Because $\det(A(\mu))$ is polynomial in the entries of $\mu$, it follows that a family is regular if and only if $A(\mu)$ is invertible for almost every $\mu \in \R^d$.

It clearly holds for all $\mu$ that
\begin{equation} \label{eq:inclkernels}
\ker(A_1)\cap \cdots \cap \ker(A_d) \subseteq \ker(A(\mu)).
\end{equation}
Thus, a necessary (but not sufficient) condition for regularity is that 
$\ker(A_1)\cap \cdots \cap \ker(A_d) = \{0\}$.
The following lemma identifies two situations in which equality holds in~\eqref{eq:inclkernels}. Here and in the following, $\reals_{>0}^{d}$ denotes the set of vectors of length $d$ with positive entries.
\begin{lemma} \label{lemma:intersection_kernels}
\begin{enumerate}
 \item[(i)] Let $A_1,\ldots,A_d$ be symmetric positive semidefinite. Then $\ker(A(\mu)) = \ker(A_1)\cap \cdots \cap \ker(A_d)$ holds for any $\mu \in \reals_{>0}^{d}$. 
 \item[(ii)] Let $(A_1,\ldots,A_d)$ be SDC. Then $\ker(A(\mu)) = \ker(A_1)\cap \cdots \cap \ker(A_d)$ holds
for almost every $\mu \in \reals^d$.
\end{enumerate}
\end{lemma}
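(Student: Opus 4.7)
The plan is to prove the two parts separately, in each case leveraging the fact that the inclusion $\ker(A_1)\cap\cdots\cap\ker(A_d)\subseteq\ker(A(\mu))$ from~\eqref{eq:inclkernels} is already available, so only the reverse inclusion needs work.

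For part (i), I would start from $x\in\ker(A(\mu))$ and take the quadratic form with $x$ itself to get
\[
0 = x^T A(\mu) x = \sum_{k=1}^d \mu_k \, x^T A_k x.
\]
Since every $A_k$ is positive semidefinite, each summand $\mu_k x^T A_k x$ is nonnegative, and since every $\mu_k>0$, each term must vanish individually. The standard fact that $x^T A_k x = 0$ together with $A_k \succeq 0$ implies $A_k x = 0$ (e.g.\ by writing $A_k = B_k^T B_k$ and noting $\|B_k x\|^2 = 0$) then gives $x \in \ker(A_k)$ for every $k$. I do not foresee any obstacle here.

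For part (ii), the main idea is to diagonalize first and then reduce the claim to a statement about the zero set of a single polynomial in $\mu$. Let $X$ be invertible with $X^T A_k X = D_k$ diagonal. Then $X^T A(\mu) X = D(\mu)$ is diagonal with entries $d_j(\mu) = \sum_{k=1}^d \mu_k (D_k)_{jj}$, each linear in $\mu$. Since $X$ is invertible, $\ker(A(\mu)) = X\ker(D(\mu))$ and $\bigcap_k \ker(A_k) = X\bigcap_k \ker(D_k)$, so the equality of kernels reduces to the equality of index sets
\[
J(\mu) := \{ j : d_j(\mu) = 0 \} \stackrel{?}{=} \{ j : (D_k)_{jj} = 0 \text{ for all } k \} =: J_\infty.
\]
The inclusion $J_\infty \subseteq J(\mu)$ is immediate. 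For the reverse, note that for each $j\notin J_\infty$ the linear form $\mu \mapsto d_j(\mu)$ is nonzero and therefore vanishes only on a hyperplane in $\R^d$, a set of Lebesgue measure zero. Taking the union over the finitely many $j\notin J_\infty$ still yields a measure-zero set, and outside this set we have $J(\mu) = J_\infty$, which is exactly the required almost-everywhere statement.

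The only delicate point is really the bookkeeping in part (ii) to make sure the ``almost every $\mu$'' quantifier is justified by a finite union of measure-zero hyperplanes; once the diagonalization is used to pass to the scalar polynomials $d_j(\mu)$, everything is routine. No obstacle beyond this is anticipated.
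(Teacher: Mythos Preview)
Your proposal is correct and follows essentially the same approach as the paper: part (i) uses the quadratic form argument to force each $x^T A_k x = 0$, and part (ii) diagonalizes via the SDC transformation and reduces the question to the vanishing of finitely many nonzero linear forms $d_j(\mu)$. The only difference is that you spell out the measure-zero justification (finite union of hyperplanes) more explicitly than the paper's one-line ``for generic $\mu$'' statement.
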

\begin{proof}
(i) To establish the other inclusion in~\eqref{eq:inclkernels}, let $x \in \ker(A(\mu))$. Then 
    $x^TA(\mu)x  = \mu_1 x^TA_1x + \ldots + \mu_d x^TA_dx = 0.$
    Because each $A_k$ is positive semidefinite and $\mu$ is positive, this implies $x^TA_kx = 0$ and, in turn, $x \in \ker(A_k)$ for every $k$.
    
    (ii) Let $X$ be invertible such that $X^T A_k X = D_k$ is diagonal for $k = 1, \ldots, d$. This implies  $X^T A(\mu) X = D(\mu)$ and \[\ker(A_k) = X \ker(D_k), \quad \ker(A(\mu)) = X \ker(D(\mu)).\]
    For generic $\mu$, $D(\mu)$ has a zero diagonal entry if and only if all the corresponding diagonal entries of $D_k$ are zero as well. This shows $\ker(D(\mu)) = \ker(D_1)\cap \cdots \cap \ker(D_d)$.
\end{proof}

The following definition is partly motivated by Lemma~\ref{lemma:intersection_kernels}.
\begin{definition}\label{def:pd_family}
A family $(A_1,\ldots,A_d)$ of symmetric matrices is called \emph{positive definite} (PD) if
$A(\mu)$ is positive definite for every $\mu \in \reals_{>0}^{d}$.
\end{definition}
For $d=2$, Definition~\ref{def:pd_family} is stronger than the classical definition of a \emph{definite pair} \cite{StewartSun1990}, which only requires the existence of at least \emph{one} positive definite linear combination.
Using Lemma~\ref{lemma:intersection_kernels}, it follows that a family $(A_1,\ldots,A_d)$ is PD if and only if each $A_k$ is positive semidefinite and $\ker(A_1)\cap \cdots \cap \ker(A_d) = \{0\}$. Clearly, a PD family is also regular.

\section{Exact recovery of SDC}\label{sec:exact_case}

\subsection{Basic idea and template algorithm}

In~\cite{Afsari08}, it is explained why JD is generically a one-matrix problem, while SDC is generically a two-matrix problem. In analogy to our work on JD in~\cite[Algorithm 1]{hekressner2024randomized}, this suggests that one can hope to solve the SDC problem from two linear combinations. More formally, given a family $(A_1,\ldots,A_d)$ that is SDC, we form two random linear combinations
\[A(\mu) = \mu_1 A_1 + \ldots + \mu_d A_d,\quad  A(\theta) = \theta_1 A_1 + \ldots + \theta_d A_d,\]
and find an invertible $X$ (if it exists) such that $X^TA(\mu)X$ and $X^TA(\theta)X$ are diagonal. This idea leads to the Randomized SDC (RSDC) summarized in Algorithm~\ref{alg:RSDC}. Implementation details, especially concerning Line 3, will be covered in Section~\ref{sec:implementation_details}. We will establish exact and robust recovery of this algorithm in Sections~\ref{sub:exact_recover} and~\ref{sec:robust_recovery}, respectively. 
\begin{algorithm}[H]
    \caption{\textbf{R}andomized \textbf{S}imultaneous \textbf{D}iagonalization via \textbf{C}ongruence (RSDC)}
    \textbf{Input:} \text{An SDC family $(A_1,\ldots,A_d)$.}\\
     \textbf{Output:} \text{Invertible matrix $X$ such that $X^T A_k X$ is diagonal for $k = 1,\ldots,d$.} \\[-0.5cm]
    \begin{algorithmic}[1]
    \label{alg:RSDC}
        \STATE Draw $\mu, \theta \in \R^d$ from specified distributions.
        \STATE Compute $A(\mu) = \mu_1 A_1 + \cdots + \mu_d A_d$, $A(\theta) = \theta_1 A_1 + \cdots + \theta_d A_d$.
        \STATE Compute an invertible $X$ such that $X^TA(\mu)X$ and $X^TA(\theta)X$ are diagonal.
    \RETURN $ X $
    \end{algorithmic}
    \end{algorithm}

\subsection{Exact recovery}
\label{sub:exact_recover}

Trivially, any pair of linear combinations $(A(\mu), A(\theta))$ of an SDC family is also SDC. It turns out that a congruence transformation diagonalizing this pair almost always diagonalizes the whole family. To show this, we exploit an existing connection between SDC and JD or, equivalently, commutativity.
\begin{lemma}[{\cite[Theorem 3.1 (ii)]{LeNguyen2022}}]
\label{lemma:iff_condition_sdc}
   A family $(A_1,\ldots A_d)$ of symmetric matrices is SDC if and only if there is an invertible matrix $P$ such that $(P^T A_1 P,\ldots, P^T A_d P)$ is a commuting family.
\end{lemma}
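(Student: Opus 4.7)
The plan is to prove the two directions separately; both are short, and the nontrivial content lives entirely in the backward direction, which rests on a classical spectral-theory fact.

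For the forward direction, suppose $(A_1,\ldots,A_d)$ is SDC. Then there exists an invertible $X$ with $X^T A_k X = D_k$ diagonal for every $k$. Diagonal matrices trivially pairwise commute, so the family $(X^T A_1 X, \ldots, X^T A_d X)$ is commuting. Taking $P = X$ gives the desired conclusion.

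For the backward direction, assume there is an invertible $P$ such that $B_k := P^T A_k P$ pairwise commute. Each $B_k$ is symmetric, since $(P^T A_k P)^T = P^T A_k^T P = P^T A_k P$. I would then invoke the well-known fact that any commuting family of real symmetric matrices admits simultaneous diagonalization by a real orthogonal matrix: there exists an orthogonal $Q$ such that $Q^T B_k Q$ is diagonal for every $k$. Setting $X = PQ$, which is invertible as a product of invertible matrices, one obtains
\[
X^T A_k X \;=\; Q^T P^T A_k P \, Q \;=\; Q^T B_k Q,
\]
which is diagonal for every $k$, so $(A_1,\ldots,A_d)$ is SDC.

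The only step that uses nontrivial mathematics is the simultaneous orthogonal diagonalization of commuting real symmetric matrices; I would state it as a standard result (or cite a linear algebra reference) rather than reprove it. There is no real obstacle: once one observes that congruence by $P$ preserves symmetry, the result reduces directly to this classical fact, and the construction $X = PQ$ is immediate.
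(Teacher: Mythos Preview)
Your argument is correct and is the standard one. Note, however, that the paper does not actually supply a proof of this lemma: it is quoted directly from \cite[Theorem 3.1 (ii)]{LeNguyen2022} and used as a black box. Your write-up therefore goes beyond what the paper does here, and the route you take (congruence by $P$ preserves symmetry, then invoke simultaneous orthogonal diagonalization of commuting real symmetric matrices, then set $X = PQ$) is exactly the natural proof one would expect and matches the argument in the cited reference.
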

\begin{remark}
\label{rmk:transformation}
    If there is a positive definite linear combination $A(\theta)$ and, hence, $A(\theta) = LL^T$ admits a Cholesky factorization, one can choose $P = L^{-1}$ in Lemma \ref{lemma:iff_condition_sdc}; see~\cite[Theorem 2.1]{LeNguyen2022}.
\end{remark}

Next, we extend our exact recovery result~\cite[Theorem 2.2]{hekressner2024randomized} on joint diagonalization from orthogonal to invertible similarity transformations.

\begin{lemma}[Joint diagonalization by similarity]
\label{lemma:jd_by_similarity}
Let $(A_1,\ldots,A_d)$ be a commuting family such that $A_k$ is diagonalizable for every $k=1,\ldots,d$. For almost every $\mu \in \R^d$, the following statement holds: If $X$ is an invertible matrix such that  $X^{-1}A(\mu)X$ is diagonal, then $X^{-1}A_kX$ is also diagonal for $k = 1,\ldots,d$.
\end{lemma}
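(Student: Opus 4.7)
The plan is to reduce to the case where the family is already in diagonal form. Since a commuting family of diagonalizable matrices is simultaneously diagonalizable by similarity, there exists an invertible $P$ such that $D_k := P^{-1} A_k P$ is diagonal for every $k$. Setting $Y := P^{-1} X$ gives $X^{-1} A_k X = Y^{-1} D_k Y$ and $X^{-1} A(\mu) X = Y^{-1} D(\mu) Y$, so it suffices to prove: for almost every $\mu$, whenever $Y$ is invertible and $Y^{-1} D(\mu) Y$ is diagonal, then $Y^{-1} D_k Y$ is diagonal for every $k$.

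The key observation is that the columns $y_j$ of $Y$ must be eigenvectors of $D(\mu)$, so it is enough to understand the eigenspaces of $D(\mu)$. For this, I partition $\{1,\ldots,n\}$ into equivalence classes $I_1,\ldots,I_r$ by declaring $i \sim j$ when the vectors of diagonal entries $((D_1)_{ii},\ldots,(D_d)_{ii})$ and $((D_1)_{jj},\ldots,(D_d)_{jj})$ agree. Within any class $I_l$, every $D_k$ acts as a scalar multiple of the identity on the coordinate subspace $V_l := \Span\{e_i : i \in I_l\}$.

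The genericity condition on $\mu$ that I will impose is: for every pair $i,j$ lying in \emph{different} classes, $D(\mu)_{ii} \neq D(\mu)_{jj}$. Since the equivalence classes depend only on $(D_1,\ldots,D_d)$ and not on $\mu$, this condition fails only on a finite union of hyperplanes $\{\mu : \mu^T (d^{(i)} - d^{(j)}) = 0\}$ with $d^{(i)} \neq d^{(j)}$, hence holds for almost every $\mu \in \R^d$. Under this condition, every eigenspace of $D(\mu)$ is exactly one of the coordinate subspaces $V_l$. Therefore each column $y_j$ of $Y$ lies in some $V_{l(j)}$, and since $D_k$ is scalar on $V_{l(j)}$, the column $y_j$ is automatically an eigenvector of $D_k$. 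This shows $Y^{-1} D_k Y$ is diagonal for every $k$, completing the reduction.

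I do not expect a serious technical obstacle; the proof is essentially a careful bookkeeping of eigenspaces. The main point to be careful about is the \emph{almost every} quantifier: one must verify that the excluded set is a \emph{finite} union of proper hyperplanes, which follows from the fact that the equivalence classes above are intrinsic to the family and independent of $\mu$, so only finitely many linear equations in $\mu$ with nonzero coefficient vector need to be ruled out.
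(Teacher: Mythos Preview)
Your proof is correct, but it takes a genuinely different route from the paper's.

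The paper argues directly with the matrices $A_k$: it fixes an eigenspace of $A(\mu)$, uses commutativity to force each $X^{-1}A_kX$ into block-diagonal form relative to that eigenspace, and then invokes an external lemma (Lemma~2.1 of~\cite{hekressner2024randomized}) to conclude that the diagonal block of each $A_k$ restricted to this eigenspace is a scalar multiple of the identity for almost every $\mu$; the argument is then completed by induction on the remaining block. By contrast, you first invoke the classical fact that a commuting family of diagonalizable matrices is simultaneously diagonalizable, reduce to a diagonal family $(D_1,\ldots,D_d)$, and then identify the eigenspaces of $D(\mu)$ explicitly as the coordinate subspaces $V_l$ determined by the equivalence classes of diagonal-entry vectors. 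Your approach is more self-contained (no external lemma, no induction) and makes the exceptional set of $\mu$ completely explicit as a finite union of proper hyperplanes. The paper's approach, on the other hand, does not need simultaneous diagonalizability as a black box and works more intrinsically with the eigenspace structure of $A(\mu)$; this is closer in spirit to how the authors handle the non-diagonalizable setting elsewhere. Both arguments ultimately rest on the same phenomenon: for generic $\mu$, the eigenspace decomposition of $A(\mu)$ coincides with the joint eigenspace decomposition of the whole family.
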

\begin{proof}
The proof of this result is along the lines of the proof of 
 \cite[Theorem 2.2]{hekressner2024randomized}; we mainly include it for the sake of completeness. Without loss of generality, we may assume that the first $n_1 \geq 1$ columns of $X$ span the eigenspace $\mathcal{X}_1$ belonging to an eigenvalue $\lambda_1(\mu)$ of $A(\mu)$. Then 
 \[X^{-1}A(\mu)X = \begin{bmatrix}
    \lambda_1(\mu)I_{n_1} & 0 \\
    0 & A_{22}(\mu)
 \end{bmatrix}, \quad X^{-1}A_kX = \begin{bmatrix}
     A_{11}^{(k)} & A_{12}^{(k)}\\
     A_{21}^{(k)} & A_{22}^{(k)}
 \end{bmatrix},\]
 with $A_{11}^{(k)} \in \R^{n_1\times n_1}$.
 As similarity transformations preserve commutativity, the matrix $X^{-1}A(\mu)X$ commutes with each $X^{-1}A_kX$. This implies
 \[A_{12}^{(k)}(\lambda_1(\mu)I - A_{22}(\mu)) = 0, \quad (A_{22}(\mu) - \lambda_1(\mu)I)A_{21}^{(k)} = 0.\]
 Because $\lambda_1(\mu)$ is not an eigenvalue of $A_{22}(\mu)$, we conclude that \[X^{-1}A_kX = \begin{bmatrix}
     A_{11}^{(k)} & 0\\
     0 & A_{22}^{(k)}
 \end{bmatrix}.\] 
 Because $A_{11}^{(1)}, \ldots, A_{11}^{(d)}$ are commuting and diagonalizable and $\mu_1 A_{11}^{(1)} + \cdots + \mu_d A_{11}^{(d)} = \lambda_1(\mu)I_{n_1}$ , it follows that each $A_{11}^{(k)}$ is diagonal, in fact, a multiple of the identity matrix~\cite[Lemma 2.1]{hekressner2024randomized} for almost every $\mu \in \reals^d$. Because the family $A_{22}^{(1)}, \ldots, A_{22}^{(d)}$ satisfies the assumptions of the lemma, we can conclude the proof by induction.
\end{proof}

We are now ready to state and prove our exact recovery result for Algorithm \ref{alg:RSDC}.
\begin{theorem}[Exact recovery]
\label{thm:exact_recovery}
    Let $(A_1,\ldots,A_d)$ be SDC. Then the following holds for almost every $(\mu,\theta) \in \reals^d \times \reals^d$: If $X$ 
is an invertible matrix such that $X^T A(\mu) X$ and $X^T A(\theta) X$ are diagonal, then the matrices $X^TA_kX$ are diagonal for $k = 1,\ldots,d$.
\end{theorem}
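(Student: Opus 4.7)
The plan is to reduce the congruence-diagonalization problem to a similarity-diagonalization problem so that Lemma~\ref{lemma:jd_by_similarity} can be invoked.

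First, use the SDC hypothesis to diagonalize the whole family once. Fix an invertible $Y$ with $Y^T A_k Y = D_k$ diagonal for every $k$. For any candidate $X$ satisfying the hypothesis of the theorem, set $W := Y^{-1} X$, so that $X^T A_k X = W^T D_k W$. It is therefore enough to prove the analogous statement for the diagonal family $(D_1, \ldots, D_d)$: for almost every $(\mu, \theta)$, if $W^T D(\mu) W$ and $W^T D(\theta) W$ are diagonal, then so is $W^T D_k W$ for every $k$.

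Next, assuming (generically) that $D(\theta)$ is invertible, I convert congruence into similarity. Writing $\Lambda_\mu := W^T D(\mu) W$ and $\Lambda_\theta := W^T D(\theta) W$ (both diagonal, $\Lambda_\theta$ invertible), the relation $D(\theta)^{-1} = W \Lambda_\theta^{-1} W^T$ yields the key identity
\[ W^{-1} \bigl[ D(\theta)^{-1} D(\mu) \bigr] W \;=\; \Lambda_\theta^{-1} \Lambda_\mu, \]
which is diagonal. Thus $W$ diagonalizes $D(\theta)^{-1} D(\mu) = \sum_k \mu_k \bigl[ D(\theta)^{-1} D_k \bigr]$ by similarity. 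The family $B_k := D(\theta)^{-1} D_k$ is already diagonal---in particular commuting and diagonalizable---so Lemma~\ref{lemma:jd_by_similarity} yields, for almost every $\mu$, that each $W^{-1} B_k W = \Delta_k$ is diagonal. Consequently
\[ W^T D_k W \;=\; (W^T D(\theta) W)(W^{-1} D(\theta)^{-1} D_k W) \;=\; \Lambda_\theta \Delta_k, \]
a product of diagonal matrices, hence diagonal. The exceptional set in $(\mu, \theta)$ consists of the measure-zero $\theta$-set where $D(\theta)$ is singular and, for each remaining $\theta$, the measure-zero $\mu$-set coming from Lemma~\ref{lemma:jd_by_similarity}; by Fubini the union has measure zero in $\reals^d \times \reals^d$.

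The main obstacle is the degenerate case $\Ncal := \ker(A_1) \cap \cdots \cap \ker(A_d) \neq \{0\}$, in which $D(\theta)$ is singular for every $\theta$. Here I use that if $\Lambda_\mu$ is diagonal with $\Lambda_\mu(i,i) = 0$, then $W^T D(\mu) w_i = 0$ for the $i$-th column $w_i$ of $W$, which forces $w_i \in \ker D(\mu)$; for generic $\mu$, Lemma~\ref{lemma:intersection_kernels}(ii) gives $\ker D(\mu) = \Ncal$, so $w_i \in \Ncal \subseteq \ker D_k$ for every $k$, and the corresponding row and column of $W^T D_k W$ vanish automatically. On the complementary index set (where $\Lambda_\mu$ and $\Lambda_\theta$ are nonzero for generic $(\mu, \theta)$), projecting the remaining columns of $W$ onto $\Ncal^\perp$ yields an invertible square matrix, and the non-degenerate argument above, applied to the restricted diagonal family $D_k|_{\Ncal^\perp}$---which has trivial common kernel---concludes the proof.
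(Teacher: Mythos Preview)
Your proof is correct and follows essentially the same strategy as the paper's: transform to a nicer family (you go directly to the diagonal $D_k$, the paper only to a commuting $B_k$ via Lemma~\ref{lemma:iff_condition_sdc}), invert one generic linear combination to convert congruence into similarity, invoke Lemma~\ref{lemma:jd_by_similarity}, and deflate the common kernel in the degenerate case. Your choice of the diagonal family makes the deflation slightly more transparent (a coordinate split instead of the paper's QR factorization), but the two arguments are otherwise the same; just note that in the degenerate paragraph $\Ncal$ should denote $\bigcap_k\ker D_k$ rather than $\bigcap_k\ker A_k$.
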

\begin{proof}
By Lemma \ref{lemma:iff_condition_sdc}, there exists an invertible matrix $P \in \reals^{n \times n}$ such that $(B_1,\ldots, B_d)$ with $B_k:= P^TA_kP$ is a commuting family. Note that $\ker(A_k) = P\ker(B_k)$ and therefore $\ker(A(\mu)) = P\ker(B(\mu))$. This relation together with Lemma \ref{lemma:intersection_kernels} imply that 
\begin{equation}\label{eq:kernel_B_mu}
    \ker(B(\mu)) = \ker(B_1) \cap \cdots \cap \ker(B_d)
\end{equation} holds for almost every $\mu \in \reals^d$. For the rest of the proof, we assume that this relation holds.

By the assumptions, $D_{\mu}:= X^TA(\mu)X$ and $D_{\theta} := X^TA(\theta)X$ are diagonal and it thus follows that 
\begin{equation}
\label{eq:Y_diagonalizes_two}
    Y^T B (\mu) Y = D_{\mu}, \quad Y^T B(\theta) Y = D_{\theta},
\end{equation}
where $Y := P^{-1}X$ is invertible. It remains to show that $Y^TB_kY$ is diagonal for $k = 1,\ldots,d$.

First consider $ \ker(B_1)\cap \cdots \cap \ker(B_d) = \{0\}$. By \eqref{eq:kernel_B_mu}, $B(\mu)$ is invertible and therefore $D_{\mu}$ is invertible, and
\[Y^{-1}B(\mu)^{-1}B(\theta)Y = D_{\mu}^{-1}D_{\theta}.\]
Note that the family $\big( B(\mu)^{-1}B_1,\ldots, B(\mu)^{-1}B_d \big)$ is also commuting because $B(\mu)^{-1}$ commutes with each $B_k$. Moreover, each matrix $B(\mu)^{-1}B_k$ is symmetric and thus diagonalizable. Note that $Y^{-1}B(\mu)^{-1}B(\theta)Y$ is diagonal. This allows us to apply Lemma~\ref{lemma:jd_by_similarity} to this family and conclude that 
\begin{equation}
    \label{eq:jd_B_k}
     C_k:= Y^{-1} B(\mu)^{-1}B_k Y
\end{equation}
is diagonal for almost every $\theta$.
By \eqref{eq:Y_diagonalizes_two},
$Y^{-1} = D_{\mu}^{-1}Y^TB(\mu)$ and plugging this relation into~\eqref{eq:jd_B_k} gives 
\[Y^TB(\mu)B(\mu)^{-1}B_kY = D_{\mu}C_k,\]
which concludes the proof.

We now treat the case when $ \ker(B_1)\cap \cdots \cap \ker(B_d) \neq \{0\}$ by deflation. Then $B(\mu)$ is not invertible and therefore $D_\mu$ is not invertible. We assume, without loss of generality, that its zero diagonal entries appear first. Thus, 
$\ker(B(\mu)) = Y\ker(D_\mu )$ with suitable partition $Y = \begin{bmatrix}
    Y_1 & Y_2
\end{bmatrix}$ where 
$\Span(Y_1) = \ker(B(\mu))$.
Considering the QR factorization of Y
\[Y = \begin{bmatrix}
    Q_1 & Q_2
\end{bmatrix} \begin{bmatrix}
    R_1 & *\\
    0 & R_2
\end{bmatrix},\]
we have $\Span(Q_1) = \ker(B(\mu)) $.
Then
\begin{align*}
Y^TB(\mu)Y &=  \begin{bmatrix}
    R^T_1 & 0\\
    * & R^T_2
\end{bmatrix}
\begin{bmatrix}
    Q^T_1 \\ Q^T_2
\end{bmatrix}B(\mu) \begin{bmatrix}
    Q_1 & Q_2
\end{bmatrix} \begin{bmatrix}
    R_1 & *\\
    0 & R_2
\end{bmatrix} \\
& = \begin{bmatrix}
    R^T_1 & 0\\
    * & R^T_2
\end{bmatrix}
\begin{bmatrix}
    0 & 0 \\
    0 & Q_2^TB(\mu)Q_2
\end{bmatrix}\begin{bmatrix}
    R_1 & *\\
    0 & R_2
\end{bmatrix}\\
&= \begin{bmatrix}
    0 & 0\\
    0 &  R_2^T Q_2^TB(\mu)Q_2 R_2^T
\end{bmatrix}.
\end{align*}

From~\eqref{eq:kernel_B_mu}, it follows that $\ker(B_k) \subset \ker(B(\mu))$.  Thus, defining $B'_k := Q_2^T B_k Q_2$,
\[Y^TB_kY = \begin{bmatrix}
    0 & 0 \\ 
    0 & R_2^TB'_kR_2
\end{bmatrix}\]
with $\ker( B'_1) \cap \cdots \cap \ker( B'_d) = \{ 0 \}$.
Because $Y$ is invertible, $R_2$ is invertible. Moreover, the family $\{B'_1,\ldots,B'_d\}$ is commuting, $R_2^{T} B'(\mu)R_2, R_2^{T} B'(\theta)R_2$ are diagonal and $\ker(B'_1) \cap \cdots \cap \ker(B'_d) =   \{ 0 \}$. This reduces the problem to the case covered in the first part of the proof.
\end{proof}
Results similar to the one by Theorem~\ref{thm:exact_recovery} have been derived in the context of tensor decompositions, for the $n\times n\times d$ tensor with frontal slices $A_1,\ldots, A_d$. In particular, Theorem 2.1 in~\cite{smoothedAnalysis} establishes essentially the same result but under an additional assumption phrased in terms of the Kruskal rank. In terms of the notation introduced in Section~\ref{sub:robust_theorems} below, this assumption amounts to imposing $m = n$. 
\begin{remark}
    \label{rmk:invertible_theta}
    For later purposes, we note that the first part of the proof also allows us to conclude that the statement of Theorem \ref{thm:exact_recovery} holds for almost every $\theta \in \reals^d$ if $\mu \in \reals^d$ is chosen such that $A(\mu)$ is invertible. 
\end{remark}

\section{Robust recovery}\label{sec:robust_recovery}

We now consider the situation when Algorithm~\ref{alg:RSDC} is applied to a family $(\tilde A_1, \ldots, \tilde A_d)$ that is not SDC itself but close to an SDC family $(A_1,\ldots,A_d)$. Algorithm~\ref{alg:RSDC} then proceeds by forming a pair $\big( \tilde A(\theta), \tilde A(\mu) \big)$ of two random linear combinations  of $\tilde A_1, \ldots, \tilde A_d$. We aim at showing a robustness result for Algorithm~\ref{alg:RSDC} of the following form: Any invertible $\tilde X$ such that $\tilde X^T \tilde A(\mu) \tilde X$ and $\tilde X^T \tilde A(\theta) \tilde X$ are diagonal nearly diagonalizes the whole family $(\tilde A_1, \ldots, \tilde A_d)$. 
While the exact recovery result of Theorem~\ref{thm:exact_recovery} applies to general SDC families, our robustness analysis will only consider regular~(Definition \ref{def:regular_family}) and, in particular, positive definite~(Definition \ref{def:pd_family}) SDC families. This restriction is due to the fact that non-regular families are not well-behaved under perturbations.

\subsection{Perturbation results} \label{sec:perturbation}

This section collects preliminary results on the perturbation theory for generalized eigenvalue problems~\cite{StewartSun1990,kressner05}.

Given a regular matrix pair $(A,B)$, a scalar $\lambda$ is called a (finite) eigenvalue with associated eigenvector $x \not=0$ if
$Ax = \lambda B x$. The \emph{eigenspace} associated with $\lambda$ is $\ker(A - \lambda B)$ and $\lambda$ is a semi-simple eigenvalue if the dimension of its eigenspace matches its algebraic multiplicity.
Deflating subspaces generalize eigenspaces of matrix pairs; just as invariant subspaces generalize eigenspaces of matrices. Concretely, a pair of subspaces $(\mathcal{X},\mathcal{Y})$ is a pair of right/left \emph{deflating subspaces} of $(A,B)$ if $\dim(\mathcal{X}) = \dim(\mathcal{Y})$ and $A\mathcal{X}, B\mathcal{X}$ are both contained in $\mathcal{Y}$.
 Note that the subspace $\mathcal{Y}$ is uniquely defined by $\mathcal{X}$~\cite{kressner05} and that any eigenspace of $(A,B)$ is also a right deflating subspace. 

Let us consider orthogonal matrices 
$\begin{bmatrix}
    X_1 & X_\perp
\end{bmatrix} \in \reals^{n \times n}$, $\begin{bmatrix}
    Y_1 & Y_\perp
\end{bmatrix} \in \reals^{n \times n}$ partitioned such that $X_1,Y_1 \in \reals^{n\times \ell}$. Then, by definition, $\big(\Span(X_1),\Span(Y_1)\big)$
is a pair of deflating subspaces if and only if
\begin{align}\label{eq:genschur}
\begin{split}
\begin{bmatrix}
    Y_1 & Y_\perp
\end{bmatrix}^TA \begin{bmatrix}
    X_1 & X_\perp
\end{bmatrix} & = \begin{bmatrix}
    A_{11} & A_{12} \\ 0 & A_{22}
\end{bmatrix}, \quad A_{11} \in \reals^{\ell \times \ell}, \\
\begin{bmatrix}
    Y_1 & Y_\perp
\end{bmatrix}^TB \begin{bmatrix}
    X_1 & X_\perp
\end{bmatrix} &= \begin{bmatrix}
    B_{11} & B_{12} \\ 0 & B_{22}
\end{bmatrix}, \quad B_{11} \in \reals^{\ell \times \ell}.
\end{split}
\end{align}

\begin{lemma}
\label{lemma:gevp_deflating_subspace_perturabtion}
With the notation introduced above, suppose that $\Span(X_1)$ is the eigenspace associated with a semi-simple, finite eigenvalue 
$\lambda \in \R$ of $(A,B)$. Given $E ,F \in \reals^{n\times n}$, the perturbed matrix pair 
$(A+\epsilon E,B+\epsilon F)$ has a right deflating subspace $\mathcal{\tilde X}_1 = \Span(\tilde X_1)$
 such that
\begin{equation} \label{eq:tildex1}
    \tilde X_1 = X_1 + \epsilon X_\perp Z_1 + \Ocal(\epsilon^2) \in \reals^{n \times n}
\end{equation}
holds for sufficiently small $\epsilon$, with the matrix 
$Z_1 \in \R^{(n-\ell)\times \ell}$ uniquely 
given by
\begin{equation} \label{eq:Z1}
     Z_1  = \big(Y_\perp^T( \lambda B - A)X_\perp \big)^{-1} Y_\perp^T( E -  \lambda F)X_1.
\end{equation}
\end{lemma}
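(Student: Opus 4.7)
The plan is to apply the implicit function theorem to a system of matrix equations that characterize right deflating subspaces of the perturbed pair.

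I would parametrize candidate right deflating subspaces near $\Span(X_1)$ by $\Span(X_1 + X_\perp Z)$ for $Z \in \R^{(n-\ell)\times \ell}$, and the corresponding candidate left deflating subspaces by $\Span(Y_1 + Y_\perp W)$ for $W \in \R^{(n-\ell)\times \ell}$. In the bases $[X_1, X_\perp]$ and $[Y_1, Y_\perp]$, the perturbed pair $(\tilde A, \tilde B) = (A + \epsilon E, B + \epsilon F)$ admits a block decomposition analogous to~\eqref{eq:genschur}, but now with nontrivial $(2,1)$-blocks equal to $\epsilon Y_\perp^T E X_1$ and $\epsilon Y_\perp^T F X_1$, respectively. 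Conjugating by $\bigl[\begin{smallmatrix} I & 0 \\ -W & I \end{smallmatrix}\bigr]$ on the left and $\bigl[\begin{smallmatrix} I & 0 \\ Z & I \end{smallmatrix}\bigr]$ on the right and insisting that the new $(2,1)$-blocks vanish yields a nonlinear system $\Phi(Z,W,\epsilon) = 0$ on $\R^{(n-\ell)\times \ell} \times \R^{(n-\ell)\times \ell}$. At $\epsilon = 0$ it is solved trivially by $(Z,W) = (0,0)$.

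Because $\lambda$ is a semi-simple finite eigenvalue of $(A,B)$ whose eigenspace is exactly $\Span(X_1)$, the block $B_{11}$ is invertible and $A_{11} = \lambda B_{11}$; moreover $\lambda B_{22} - A_{22}$ is invertible because $\lambda$ is not an eigenvalue of the deflated pair $(A_{22},B_{22})$. The Jacobian of $\Phi$ with respect to $(Z,W)$ at $(0,0,0)$ acts as $(Z,W) \mapsto (A_{22} Z - W A_{11},\, B_{22} Z - W B_{11})$. Given a right-hand side $(P,Q)$, eliminating $W$ by means of $A_{11} = \lambda B_{11}$ gives $(A_{22} - \lambda B_{22})Z = P - \lambda Q$, which is uniquely solvable, and then $W$ is recovered uniquely from $W B_{11} = B_{22} Z - Q$ since $B_{11}$ is invertible. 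Hence the Jacobian is invertible.

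The implicit function theorem therefore provides, for all sufficiently small $\epsilon$, a smooth curve $\bigl(Z(\epsilon),W(\epsilon)\bigr)$ with $Z(0) = 0$, $W(0) = 0$ satisfying $\Phi(Z(\epsilon),W(\epsilon),\epsilon) = 0$, so that $\tilde X_1 := X_1 + X_\perp Z(\epsilon)$ spans a right deflating subspace of the perturbed pair. Differentiating $\Phi = 0$ at $\epsilon = 0$ reduces to the linear system $A_{22} Z'(0) - W'(0) A_{11} = -Y_\perp^T E X_1$ together with $B_{22} Z'(0) - W'(0) B_{11} = -Y_\perp^T F X_1$; solving it exactly as above yields
\[
Z'(0) = \bigl(Y_\perp^T(\lambda B - A) X_\perp\bigr)^{-1} Y_\perp^T(E - \lambda F) X_1,
\]
which is precisely $Z_1$ as in~\eqref{eq:Z1}, so that \eqref{eq:tildex1} follows by Taylor expansion. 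The main obstacle, as is typical for such perturbation arguments, is verifying invertibility of the linearized operator, and this in turn rests on extracting the two algebraic identities $A_{11} = \lambda B_{11}$ with $B_{11}$ nonsingular, and $\lambda B_{22} - A_{22}$ nonsingular, from the semi-simplicity and finiteness of~$\lambda$.
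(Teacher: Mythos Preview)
Your argument is correct and self-contained: the parametrization by $(Z,W)$, the identification of the Jacobian as the generalized Sylvester operator $(Z,W)\mapsto(A_{22}Z-WA_{11},\,B_{22}Z-WB_{11})$, and its invertibility via $A_{11}=\lambda B_{11}$, $B_{11}$ nonsingular, and $A_{22}-\lambda B_{22}$ nonsingular are all sound, and the first-order term you extract matches~\eqref{eq:Z1}.

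The paper takes a different route. Rather than setting up the implicit function theorem from scratch, it quotes an existing perturbation expansion for deflating subspaces (from Sun's monograph and Kressner's book) that already gives $\tilde X_1 = X_1 + \epsilon X_\perp Z_1 + \Ocal(\epsilon^2)$ with $\vectorize(Z_1)$ expressed through Kronecker-product matrices built from $A_{11},B_{11},A_{22},B_{22}$. It then uses the eigenspace hypothesis $A_{11}B_{11}^{-1}=\lambda I_\ell$ to collapse those Kronecker expressions to $I_\ell\otimes(\lambda B_{22}-A_{22})^{-1}$ and $-\lambda I_\ell\otimes(\lambda B_{22}-A_{22})^{-1}$, from which~\eqref{eq:Z1} follows immediately. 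So the paper's proof is essentially ``cite and specialize'', whereas yours rederives the cited result in the special case at hand. Your approach has the advantage of being self-contained and of making transparent exactly where semi-simplicity (for $A_{22}-\lambda B_{22}$ invertible) and finiteness (for $B_{11}$ invertible) enter; the paper's approach is shorter on the page and connects the result to the general Sylvester-operator framework for deflating-subspace perturbation.
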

\begin{proof}
Existing perturbation expansions, see~\cite[Corollary 4.1.7]{jgsun2002} and~\cite[Theorem 2.8]{kressner05}, state that~\eqref{eq:tildex1} holds with
    \begin{equation}\label{eq:Z_1_perturbation_analysis}
        \vectorize(Z_1) = C_{11}\vectorize(E_{21}) + C_{12}\vectorize(F_{21}),
    \end{equation}
    where 
        $E_{21}:= Y_\perp^TEX_1$, $F_{21}:= Y_\perp^TFX_1$, and
    \begin{align*}
        C_{11}  &= (B^T_{11} \otimes I_{n-\ell})( A^T_{11} \otimes B_{22 }-B^T_{11} \otimes A_{22})^{-1},\\  C_{12}  &= (-A^T_{11} \otimes I_{n-\ell})( A^T_{11} \otimes B_{22 }-B^T_{11} \otimes A_{22})^{-1},
    \end{align*}
    with $\vectorize(\cdot)$ denoting the vectorization of a matrix. Because $\mathcal{X}_1$ is an eigenspace associated with $\lambda$, we have that $A_{11} B_{11}^{-1} = \lambda I_\ell$ and, thus, the relations above simplify to
    \begin{equation*}C_{11}  = I_\ell \otimes (\lambda B_{22} -  A_{22})^{-1},\quad  C_{12}   =  - \lambda I_\ell \otimes (\lambda B_{22} - A_{22})^{-1}.\end{equation*}
    Inserted into~\eqref{eq:Z_1_perturbation_analysis}, this shows~\eqref{eq:Z1} by using basic properties of the Kronecker product.
    The uniqueness of $Z_1$ follows from the uniqueness of the Taylor expansion together with the fact that $X_\perp$ has full column rank.
\end{proof}

For the purpose of analyzing the impact of perturbations on diagonalization by congruence, we require the following variant of Lemma \ref{lemma:gevp_deflating_subspace_perturabtion}.

\begin{lemma}
\label{lemma:gevp_perturabtion}
Let $(A,B)$ be SDC and let $X = \begin{bmatrix}
    X_1 & X_2
\end{bmatrix} \in \R^{n\times n}$ be an invertible matrix such that: $X^TAX$, $X^TBX$ are diagonal and $X_1 \in \R^{n\times \ell}$ is an orthonormal basis of the eigenspace associated with a semi-simple eigenvalue $\lambda \in \R$. 
Given $E,F \in \reals^{n \times n}$, the perturbed matrix pair $(A+\epsilon E, B + \epsilon F)$ has a right deflating subspace $\tilde{\mathcal{X}}_1 = \Span(\tilde X_1)$ such that 
\begin{equation} \label{eq:modpertexpansion}
    \tilde  X_1 = \hat X_1 + \epsilon X_2Z_1 + \mathcal O(\epsilon^2) \in \reals^{n \times n}
\end{equation}
holds for sufficiently small $\epsilon$,
where
\begin{equation}\label{eq:X_1_hat}
\hat X_1 =  X_1(I- \epsilon X_1^TX_2(X_2^T(\lambda B - A)X_2)^{-1}X_2^T(E - \lambda F)X_1)
\end{equation}
and $Z_1 \in \reals^{(n-\ell) \times \ell}$ is uniquely given by 
\begin{equation}
    Z_1 = (X_2^T(\lambda B - A)X_2)^{-1}X_2^T( E - \lambda F)X_1.
\end{equation}
\end{lemma}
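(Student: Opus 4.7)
The plan is to reduce the claim to Lemma~\ref{lemma:gevp_deflating_subspace_perturabtion} by converting an orthonormal basis expansion into one using the oblique basis $[X_1,X_2]$. Since $X_1$ is already orthonormal, I would complete it to an orthogonal $[X_1,X_\perp] \in \R^{n\times n}$ and identify the matching left orthonormal basis. Writing $X^{-T} = [W_1,W_2]$ conformally, the SDC factorizations $A = X^{-T} D_A X^{-1}$ and $B = X^{-T} D_B X^{-1}$ give $AX_1 = W_1\, D_A|_{1:\ell}$ and $BX_1 = W_1\, D_B|_{1:\ell}$, identifying $\Span(W_1)$ as the left deflating subspace of $(A,B)$ associated with $\lambda$. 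Because $W_1^T X_2 = 0$ with matching dimensions, $\Span(W_1)^\perp = \Span(X_2)$, so any admissible $Y_\perp$ for Lemma~\ref{lemma:gevp_deflating_subspace_perturabtion} has the form $Y_\perp = X_2 K$ with $K^T X_2^T X_2 K = I_{n-\ell}$.

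Applying Lemma~\ref{lemma:gevp_deflating_subspace_perturabtion} with this data yields a perturbed deflating subspace spanned by $X_1 + \epsilon X_\perp Z_1^\perp + \Ocal(\epsilon^2)$, with $Z_1^\perp$ given by~\eqref{eq:Z1}. Next I would re-express this basis through the identity $I = X_1 W_1^T + X_2 W_2^T$. A short calculation, using $W_2^T X_2 = I_{n-\ell}$ and $\Span(W_2) = \Span(X_\perp)$, gives $W_2 = X_\perp(X_\perp^T X_2)^{-T}$ and hence $W_2^T X_\perp = (X_\perp^T X_2)^{-1}$; solving $X_1 W_1^T X_\perp = X_\perp - X_2 W_2^T X_\perp$ and using $X_1^T X_1 = I_\ell$, $X_1^T X_\perp = 0$, then gives $W_1^T X_\perp = -X_1^T X_2 (X_\perp^T X_2)^{-1}$. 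Setting $Z_1 := (X_\perp^T X_2)^{-1} Z_1^\perp$, this produces
\[
X_1 + \epsilon X_\perp Z_1^\perp + \Ocal(\epsilon^2) \;=\; X_1\bigl(I_\ell - \epsilon X_1^T X_2 Z_1\bigr) + \epsilon X_2 Z_1 + \Ocal(\epsilon^2),
\]
which matches the form $\hat X_1 + \epsilon X_2 Z_1 + \Ocal(\epsilon^2)$ asserted in the lemma.

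The main obstacle is to show that this $Z_1$ simplifies to the closed form in the statement. For this I would exploit the SDC-specific identity $X_1^T(\lambda B - A) = 0$ (a consequence of $X^T A X$ and $X^T B X$ being diagonal with matching $\ell \times \ell$ leading blocks vanishing in $\lambda B - A$), which implies $(\lambda B - A) X_2 = X_\perp\, X_\perp^T(\lambda B - A) X_2$ and therefore
\[
X_2^T(\lambda B - A) X_2 \;=\; (X_\perp^T X_2)^T\, X_\perp^T(\lambda B - A) X_\perp\, (X_\perp^T X_2).
\]
Combining this with $Y_\perp^T = K^T X_2^T$, so that $K$ cancels between $\bigl(Y_\perp^T(\lambda B - A) X_\perp\bigr)^{-1}$ and $Y_\perp^T(E - \lambda F) X_1$ in $Z_1^\perp$, reduces $(X_\perp^T X_2)^{-1} Z_1^\perp$ to $(X_2^T(\lambda B - A) X_2)^{-1} X_2^T(E - \lambda F) X_1$, as desired. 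Invertibility of $X_2^T(\lambda B - A) X_2$ is guaranteed by semi-simplicity of $\lambda$ combined with the block-diagonal form of $X^T(\lambda B - A)X$. Finally, uniqueness of $Z_1$ is inherited from the uniqueness in Lemma~\ref{lemma:gevp_deflating_subspace_perturabtion} together with the invertibility of $[X_1,X_2]$.
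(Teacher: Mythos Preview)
Your proof is correct and follows essentially the same route as the paper: both reduce to Lemma~\ref{lemma:gevp_deflating_subspace_perturabtion}, identify $Y_\perp$ as a basis of $\Span(X_2)$, rewrite $X_\perp$ in terms of $X_1,X_2$, and simplify using $(\lambda B - A)X_1 = 0$. The only cosmetic difference is that you carry out the basis change via the dual basis $X^{-T}=[W_1,W_2]$, whereas the paper directly substitutes $X_\perp = (I - X_1X_1^T)X_2 C_X$ and $Y_\perp = X_2 C_Y$ into the expansion and simplifies in one pass.
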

\begin{proof}
As $X_1$ spans a right deflating subspace we can find orthogonal matrices $\begin{bmatrix}
        X_1,X_\perp
    \end{bmatrix}$, $\begin{bmatrix}
        Y_1,Y_\perp
    \end{bmatrix}$ satisfying~\eqref{eq:genschur}.
       Because $X_2^T A X_1 = 0$, $X_2^T B X_1 = 0$, and $X$ is invertible, we have
    \[\Span(X_\perp) = \Span((I-X_1X_1^T)X_2), \quad \Span(Y_\perp) = \Span(X_2).\]
    Therefore, there exist invertible matrices $C_X, C_Y$ such that \[
    X_\perp = (I-X_1X_1^T)X_2C_X, \quad Y_\perp = X_2C_Y.
    \]
   Inserted into the perturbation expansion of 
    Lemma~\ref{lemma:gevp_deflating_subspace_perturabtion}, this yields 
    \begin{align*}
        \tilde  X_1 &= X_1 + \epsilon X_\perp \big(Y_\perp^T( \lambda B - A)X_\perp \big)^{-1} Y_\perp^T( E -  \lambda F)X_1 + \Ocal(\epsilon^2)  \\
        &= X_1 +  \epsilon(I-X_1X_1^T)X_2(X_2^T(\lambda B - A) (I-X_1X_1^T)X_2)^{-1}X_2^T(E - \lambda F)X_1 + \Ocal(\epsilon^2) \\
        &=X_1 +  \epsilon(I-X_1X_1^T)X_2(X_2^T(\lambda B - A)X_2)^{-1}X_2^T(E - \lambda F)X_1 + \Ocal(\epsilon^2) \\
        &= X_1(I- \epsilon X_1^TX_2(X_2^T(\lambda B - A)X_2)^{-1}X_2^T(E - \lambda F)X_1)  \\
        &+ \epsilon X_2(X_2^T(\lambda B - A)X_2)^{-1}X_2^T(E - \lambda F)X_1 + \Ocal(\epsilon^2),
    \end{align*}
    where we used $(\lambda B - A) (I-X_1X_1^T)  = (\lambda B - A)$ in the second equality. It can be directly verified that the last expression matches~\eqref{eq:modpertexpansion}.
\end{proof}

\subsection{A structural bound}
\label{sub:robust_theorems}

Let $\Acal = (A_1,\ldots,A_d)$ be a regular SDC family and let $X$ be an invertible matrix such that $D_k := X^T A_k X$ is diagonal.
Let $\theta \in \R^d$ be an arbitrary fixed vector such that $A(\theta)$ and, hence, $D(\theta) = \theta_1 D_1 + \cdots + \theta_d D_d$ is invertible. Then the diagonal matrix $\Lambda_k = D_k D(\theta)^{-1}$ contains the eigenvalues of the pair $(A_k, A(\theta))$. Collecting all diagonal entries at position $i$ into a vector $[\Lambda_1(i,i), \ldots, \Lambda_d(i,i)] \in \R^d$, we may reorder the columns of $X$ so that identical vectors are grouped together. In other words, we may assume that
\begin{equation} \label{eq:clusterlambdak}
  \Lambda_k = \begin{bmatrix}
              \lambda^{(k)}_1 I_{n_1} \\ & \ddots & \\ && 
              \lambda^{(k)}_m I_{n_m}
             \end{bmatrix}, \
             [\lambda^{(1)}_i,\ldots,\lambda^{(d)}_i] \not= [\lambda^{(1)}_j,\ldots,\lambda^{(d)}_j],\ \forall i\not=j.
\end{equation}
It is simple to see that this grouping does not depend on the particular choice of $\theta$.

By~\eqref{eq:clusterlambdak}, the matrix $\Lambda(\mu) = D(\mu) D(\theta)^{-1} = \mu_1 \Lambda_1 + \cdots + \mu_d \Lambda_d$ takes the following form for almost every $\mu$:
\begin{equation} \label{eq:clusterlambdamu}
\Lambda(\mu) = 
\begin{bmatrix}
              \lambda_1(\mu) I_{n_1} \\ & \ddots & \\ && 
              \lambda_m(\mu) I_{n_m}
             \end{bmatrix},\ \lambda_i(\mu) \not= \lambda_j(\mu),\ \forall i\not= j.
\end{equation}
Partitioning $X = [X_1,\ldots,X_m]$ with $X_i \in \R^{n\times n_i}$, we thus have that $\Span(X_i)$ is the eigenspace of $A(\mu) A(\theta)^{-1}$ associated with the eigenvalue $\lambda_i(\mu)$. Without loss of generality, we may assume that the columns of $X_i$ are orthonormal.

 We are now ready to state the following structural bound on robust recovery.
\begin{theorem}[Structural bound]
\label{thm:structural_bound}
Given a regular SDC family $\mathcal A = (A_1,\ldots, A_d)$ with $d\ge 2$, consider the perturbed family
$\widetilde{\mathcal A} = (\tilde A_1,\ldots, \tilde A_d)$ with $\tilde A_k = A_k + \epsilon E_k$ such that $\fnorm{E_1}^2 + \cdots + \fnorm{E_d}^2 = 1$ and $\epsilon > 0$. For $\theta \in \reals^d$ such that $A(\theta)$ is invertible, define the diagonal matrices $\Lambda_k$ and $\Lambda(\mu)$ as in~\eqref{eq:clusterlambdak}--\eqref{eq:clusterlambdamu}. Then the following is true for almost every $\mu \in \reals^d$: For any invertible matrix $\tilde X \in \reals^{n \times n}$ such that $\tilde X^T \tilde A(\mu) \tilde X$ and $\tilde X ^T \tilde A(\theta) \tilde X$ are diagonal, it holds that 
\begin{align*}
&\frac{1}{\|\tilde X\|_2^2} \Big(\sum_{k=1}^{d}\big\|\offdiag(\tilde X^T \tilde A_k \tilde X) \big\|_F^2 \Big)^{1/2}\\\
\leq& \sqrt{d} \big( D_{\max} \|E(\mu)\|_F + ( \lambda^\mu_{\max} D_{\max} + \lambda_{\max})  \|E(\theta)\|_F\big)\epsilon + \epsilon + \mathcal O(\epsilon^2)
\end{align*}
where $\lambda^\mu_{\max} := \|\Lambda(\mu)\|_2$, $\lambda_{\max} := \max_k \|\Lambda_k\|_2$, and
\begin{equation}\label{eq:lambda_max}
    D_{\max}  := 
\max_{i,k}\{\|(\Lambda_k - \lambda^{(k)}_{i}I)(\lambda_i(\mu)I - \Lambda(\mu))\pesudoinverse\|_2\}.
\end{equation}

\end{theorem}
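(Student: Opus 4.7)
The plan is to decompose $\tilde X$ according to the eigenstructure of the original pair $(A(\mu), A(\theta))$ and apply the perturbation expansion of Lemma~\ref{lemma:gevp_perturabtion} to each eigenspace. For almost every $\mu$, all $\lambda_i(\mu)$ in~\eqref{eq:clusterlambdamu} are distinct, so $\Span(X_i)$ is precisely the eigenspace of $(A(\mu), A(\theta))$ for eigenvalue $\lambda_i(\mu)$. Since $\tilde X$ diagonalizes $\tilde A(\mu)$ and $\tilde A(\theta)$ simultaneously, its columns partition into blocks $\tilde X = [\tilde X_1, \ldots, \tilde X_m]$ where each $\Span(\tilde X_i)$ is a right deflating subspace of the perturbed pair that perturbs $\Span(X_i)$. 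Applying Lemma~\ref{lemma:gevp_perturabtion} with eigenvalue $\lambda_i(\mu)$ and perturbations $\epsilon E(\mu), \epsilon E(\theta)$ produces a canonical basis $\check X_i = X_i M_i + \epsilon X_{\bar i} Z_i + \Ocal(\epsilon^2)$ of this subspace, where $X_{\bar i}$ collects the remaining columns of $X$; then $\tilde X_i = \check X_i W_i$ for some invertible $W_i$.

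The central algebraic step is expanding $\tilde X_j^T \tilde A_k \tilde X_i$ for $j \neq i$. Using $X_j^T A_k X_i = 0$ for $j \neq i$, $X_i^T A_k X_i = \alpha_i^{(k)} I_{n_i}$ where $\alpha_i^{(k)} = (X^T A_k X)_{\ell\ell}$ for any $\ell$ in block $i$, and the block-diagonal form of $Z_i$, the prefactors $\tau_i = (X^T A(\theta) X)_{\ell\ell}$ cancel via $\alpha_i^{(k)} / \tau_i = \lambda_i^{(k)}$. After simplification,
\[
\tilde X_j^T \tilde A_k \tilde X_i = \epsilon\, W_j^T X_j^T \Bigl[c_{jik}\bigl(E(\mu) - \lambda_i(\mu) E(\theta)\bigr) - \lambda_i^{(k)} E(\theta) + E_k\Bigr] X_i W_i + \Ocal(\epsilon^2),
\]
where $c_{jik} = (\lambda_j^{(k)} - \lambda_i^{(k)})/(\lambda_i(\mu) - \lambda_j(\mu))$. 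Recognizing $c_{jik}$ as the $(j,j)$ diagonal block entry of $(\Lambda_k - \lambda_i^{(k)} I)(\lambda_i(\mu) I - \Lambda(\mu))\pesudoinverse$ yields $|c_{jik}| \leq D_{\max}$, so the coefficient of $E(\theta)$ is bounded by $\lambda^\mu_{\max} D_{\max} + \lambda_{\max}$.

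For the within-block blocks $\tilde X_i^T \tilde A_k \tilde X_i$, the identity $X_i^T A_k X_{\bar i} = 0$ reduces $\check X_i^T A_k \check X_i$ to $\alpha_i^{(k)} M_i^T M_i + \Ocal(\epsilon^2)$. The hypothesis that $\tilde X^T \tilde A(\theta) \tilde X$ be exactly diagonal then forces $\tau_i \offdiag(W_i^T M_i^T M_i W_i) = -\epsilon\, \offdiag(W_i^T X_i^T E(\theta) X_i W_i) + \Ocal(\epsilon^2)$, which gives
\[
\offdiag(\tilde X_i^T \tilde A_k \tilde X_i) = \epsilon \Bigl[-\lambda_i^{(k)}\, \offdiag(W_i^T X_i^T E(\theta) X_i W_i) + \offdiag(W_i^T X_i^T E_k X_i W_i)\Bigr] + \Ocal(\epsilon^2),
\]
whose coefficients are consistent with those in the between-block expression above.

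Finally, assemble the between-block and within-block contributions and take Frobenius norms. The triangle inequality separates the three error types; Minkowski's inequality over $k$ produces the $\sqrt{d}$ factor on the two $k$-independent coefficients $D_{\max}$ and $\lambda^\mu_{\max} D_{\max} + \lambda_{\max}$, while $\sqrt{\sum_k \|E_k\|_F^2} \leq 1$ yields the separate $+\epsilon$ term from $E_k$. Normalization by $\|\tilde X\|_2^2$ absorbs the $\|W_i\|_2 \|W_j\|_2$ prefactors, since $\check X_i = X_i + \Ocal(\epsilon)$ with $\|X_i\|_2 = 1$ implies $\|W_i\|_2 \leq (1 + \Ocal(\epsilon)) \|\tilde X\|_2$. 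The main obstacle is the within-block analysis: one must exploit the exact diagonality of $\tilde X^T \tilde A(\theta) \tilde X$ (not merely to leading order) to obtain the clean expression for $\offdiag(\tilde X_i^T \tilde A_k \tilde X_i)$, and must manage the $W_i$-norms and the structure of $X$ carefully so that $\|\tilde X\|_2^2$ absorbs all prefactors without introducing spurious factors of $\|X\|_2$.
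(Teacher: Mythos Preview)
Your approach is correct and closely parallels the paper's, but there is one technical slip and one organizational difference worth noting.

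The slip: you write $X_i^T A_k X_i = \alpha_i^{(k)} I_{n_i}$, but this is false in general. The grouping~\eqref{eq:clusterlambdak} only forces the \emph{ratios} $\Lambda_k(\ell,\ell) = D_k(\ell,\ell)/D(\theta)(\ell,\ell)$ to agree within block $i$, not the entries $D_k(\ell,\ell)$ themselves. The correct statement is $X_i^T A_k X_i = (D_k)_i = \lambda_i^{(k)} (D(\theta))_i$, a diagonal but generally non-scalar matrix. Fortunately your argument survives this correction verbatim: in the between-block computation the product $(D_k)_j (D(\theta))_j^{-1} = \lambda_j^{(k)} I_{n_j}$ still produces the scalar $c_{jik}$, and in the within-block computation the identity $(D_k)_i = \lambda_i^{(k)} (D(\theta))_i$ still lets you factor out $\lambda_i^{(k)}$ and invoke the exact diagonality of $\tilde X^T \tilde A(\theta) \tilde X$ to control $\offdiag\big(W_i^T M_i^T (D(\theta))_i M_i W_i\big)$.

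The organizational difference: the paper avoids your separate within-block analysis by first subtracting the diagonal matrix $\tilde X^T \tilde A(\theta) \tilde X\,\Lambda_k$ from $\tilde X^T A_k \tilde X$ before taking $\offdiag$. This rewrites the result columnwise as
\[
\tilde X^T\big[(A_k-\lambda_1^{(k)}A(\theta))\tilde X_1,\ \ldots,\ (A_k-\lambda_m^{(k)}A(\theta))\tilde X_m\big],
\]
and because $(A_k - \lambda_i^{(k)} A(\theta)) X_i = 0$, the zeroth-order term vanishes for every block column at once, diagonal blocks included. The left factor $\tilde X^T$ is kept intact rather than expanded, so Lemma~\ref{lemma:gevp_perturabtion} is applied only once per block column and no explicit $W_j$ appears on the left. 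Your route arrives at the same first-order object (your matrix of coefficients $c_{jik}$ is exactly the paper's $\tilde D_i^{(k)} = (\Lambda_k - \lambda_i^{(k)} I)(\lambda_i(\mu) I - \Lambda(\mu))\pesudoinverse$, and reassembling your blocks gives precisely $[\tilde D_1^{(k)} X^T F_1 X_1,\ldots,\tilde D_m^{(k)} X^T F_m X_m]$), but the subtraction trick packages the computation more compactly and sidesteps the delicate within-block argument entirely.
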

\begin{proof} 

Let $X = [X_1,\ldots,X_d]$ be the matrix described above. In particular, $X_i$ is, for almost every $\mu$, an orthonormal basis for the eigenspace of $(A(\mu),A(\theta))$ associated with the semi-simple eigenvalue $\lambda_i(\mu)$. Because the norm of the off-diagonal part is invariant under reordering the columns of $\tilde X$, we may assume without loss of generality that $\tilde X = [\tilde X_1, \ldots, \tilde X_m]$, where each $\tilde X_i \in \R^{n\times n_i}$ spans a right deflating subspace for the perturbed matrix pair $(\tilde A(\mu),\tilde A(\theta))$ that is 
close to $\Span(X_i)$ in the sense of Lemmas~\ref{lemma:gevp_deflating_subspace_perturabtion} and~\ref{lemma:gevp_perturabtion}.


Using that $\tilde X^T \tilde A(\theta) \tilde X \Lambda_k$ is diagonal and that a diagonal modification does not affect the off-diagonal norm, we obtain that
\begin{align}
        &\offdiag(\tilde X^T A_k \tilde X) \nonumber\\
        =& \offdiag(\tilde X^T A_k \tilde X - \tilde X^T \tilde A(\theta) \tilde X \Lambda_k)\nonumber\\ 
        =& \offdiag(\tilde X^T A_k \tilde X - \tilde X^T  A(\theta) \tilde X \Lambda_k) +  \epsilon\cdot \offdiag(\tilde X^T E(\theta)  \tilde X\Lambda_k).\label{eq:offdiag_decomp}
    \end{align}
We rewrite the first term in~\eqref{eq:offdiag_decomp} as follows:
\begin{equation}
 \tilde X^T ( A_k \tilde X -   A(\theta) \tilde X \Lambda_k) 
    = \tilde X^T \big[
         (A_k - \lambda^{(k)}_{1}A(\theta))\tilde X_1,\ \ldots,\  (A_k - \lambda^{(k)}_{m}A(\theta))\tilde X_m\big].
         \label{eq:off_main_decomp}
\end{equation}
Considering the first block $(A_k - \lambda^{(k)}_{1}A(\theta))\tilde X_1$ in the right-hand side of~\eqref{eq:off_main_decomp}, we set $X_{-1} = \begin{bmatrix}
    X_2 & \ldots & X_m
\end{bmatrix}$ and apply Lemma \ref{lemma:gevp_perturabtion} to obtain, for sufficiently small $\epsilon$, a basis $\bar X_1$ such that $\Span(\hat X_1) = \Span(\tilde X_1)$ and 
\begin{equation}
    \label{eq:pertubation_basis}
    \bar X_1 = \hat X_1 + \epsilon X_{-1}(X^T_{-1}(\lambda_1(\mu)A(\theta) - A(\mu))X_{-1})^{-1}X_{-1}^T F_1 X_1  + \mathcal O(\epsilon^2)
\end{equation}
where  $F_1: = E(\mu) - \lambda_1(\mu)E(\theta)$ and $\hat X_1$ is a basis of $\Span(X_1)$ satisfying $\hat X_1 = X_1 + \mathcal O(\epsilon)$. As there is an invertible matrix $C_1 \in \R^{n_1\times n_1}$ such that $\tilde X_1 = \bar X_1 C_1$, we can rewrite~\eqref{eq:pertubation_basis} as
\begin{equation}
    \label{eq:pertubation_eigenvector}
    \tilde X_1 = \hat X_1 C_1+ \epsilon X_{-1}(X^T_{-1}(\lambda_1(\mu)A(\theta) - A(\mu))X_{-1})^{-1}X_{-1}^T F_1 X_1 C_1  + \mathcal O(\epsilon^2).
\end{equation}
 Using that $(A_k - \lambda^{(k)}_{1}A(\theta)) \hat X_1 = 0$, this implies
\begin{align*}
    &(A_k - \lambda^{(k)}_{1}A(\theta))\tilde X_1\\ 
    =& \epsilon (A_k - \lambda^{(k)}_{1}A(\theta))X_{-1}(X_{-1}^T(\lambda_1(\mu)A(\theta) - A(\mu))X_{-1})^{-1}X_{-1}^TF_1X_1C_1 + \mathcal O(\epsilon^2)\\
    =&\epsilon X^{-T}X^T(A_k - \lambda^{(k)}_{1}A(\theta))X(X^T(\lambda_1(\mu)A(\theta)-A(\mu))X)\pesudoinverse X^TF_1X_1C_1 + \mathcal O(\epsilon^2)\\
    =& \epsilon X^{-T} (\Lambda_k -\lambda^{(k)}_{1}I)(\lambda_1(\mu)I - \Lambda(\mu))\pesudoinverse X^{T}F_1X_1C_1 + \mathcal O(\epsilon^2).
\end{align*}
For general $i \in \{1,\ldots,m\}$, we obtain in an analogous fashion that
\begin{equation}
\label{eq:perturbation_bound}
    (A_k - \lambda^{(k)}_{i}A(\theta))\tilde X_i
    = \epsilon X^{-T} \tilde D_i^{(k)} X^{T}F_iX_iC_i + \mathcal O(\epsilon^2),
\end{equation}
where $F_i = E(\mu) - \lambda_{i}(\mu)E(\theta)$,
$\tilde D_i^{(k)}
= (\Lambda_k - \lambda^{(k)}_{i}I)(\lambda_{i}(\mu)I - \Lambda(\mu))\pesudoinverse$ and $C_i \in \R^{n_i\times n_i}$ is invertible. Note that
$D_{\max} = \max_{i,k}\{\|\tilde D_i^{(k)}\|_2\}$.
Plugging \eqref{eq:perturbation_bound} into \eqref{eq:off_main_decomp} yields
\begin{align}
    &\tilde X^T A_k \tilde X - \tilde X^T  A(\theta) \tilde X \Lambda_k \nonumber\\
    &= \epsilon \tilde X^T X^{-T} \begin{bmatrix}
         \tilde D_1^{(k)}X^{T}F_1X_1C_1 & \cdots &   \tilde D_m^{(k)} X^{T}F_mX_mC_m
    \end{bmatrix} + \mathcal O(\epsilon^2) \nonumber \\
    &=\epsilon C^T\begin{bmatrix}
         \tilde D_1^{(k)}X^{T}F_1X_1 & \cdots &  \tilde D_m^{(k)} X^{T}F_mX_m
    \end{bmatrix}C + \mathcal O(\epsilon^2) \nonumber \\
    &= \epsilon \begin{bmatrix}
         \tilde D_1^{(k)}(XC)^{T}F_1X_1 & \cdots &  \tilde D_m^{(k)} (XC)^{T}F_mX_m
    \end{bmatrix}C + \mathcal O(\epsilon^2) \nonumber,
\end{align}
where $C= \diag(C_1,\ldots, C_m)$. The second equality above exploits the property $\tilde X = \bar X C = XC + \Ocal(\epsilon)$, and the third equality uses that the block diagonal matrix $C$ commutes with the diagonal matrix $\tilde D_i^{(k)}$ for each $i$. Thus, we get
\begin{align}
    &\|\offdiag(\tilde X^T A_k \tilde X - \tilde X^T  A(\theta) \tilde X \Lambda_k ) \|_F \nonumber\\
    \leq & \epsilon  D_{\max} \| XC \|_2 \big\|\begin{bmatrix}
         F_1X_1 & \cdots & F_mX_m
    \end{bmatrix}C \big\|_F + \mathcal O(\epsilon^2)\nonumber\\
    \leq& \epsilon D_{\max} \|XC\|_2 (\|E(\mu)\|_F \|XC\|_2 + \|E(\theta)\|_F \|X\Lambda(\mu)C\|_2) + \mathcal O(\epsilon^2)\nonumber\\
    =& \epsilon D_{\max} \|XC\|_2 (\|E(\mu)\|_F \|XC\|_2 + \|E(\theta)\|_F \|XC\Lambda(\mu)\|_2) + \mathcal O(\epsilon^2)\nonumber\\
    \leq& \epsilon \|\tilde X\|_2^2D_{\max} (\|E(\mu)\|_F + \lambda^\mu_{\max} \|E(\theta)\|_F) + \mathcal O(\epsilon^2), \label{eq:firsttermbound}
\end{align}
where the third equality uses that $C$ and $\Lambda(\mu)$ commute and the last inequality again uses $\tilde X = XC + \Ocal(\epsilon)$.

It remains to bound the second term in~\eqref{eq:offdiag_decomp}:  \[\big\|\offdiag(\tilde X^T E(\theta)  \tilde X \Lambda_k)\big\|_F \leq   \|\tilde X\|_2^2 \lambda_{\max} \|E(\theta)\|_F .\]
Plugging this bound together with~\eqref{eq:firsttermbound} into~\eqref{eq:offdiag_decomp} and using the triangle inequality establish the result of the theorem.
%
\end{proof}

\subsection{Probabilistic bounds for robust recovery}
\label{subsec:prob_bounds}

By analyzing the quantities involved in the bound of Theorem~\ref{thm:structural_bound}, we derive probabilistic bounds for random $\theta,\mu$, specifically for Gaussian random vectors ($ \sim \Normal{0}{I_d}$). Recall that $m$ is the number of distinct eigenvalue vectors defined in \eqref{eq:clusterlambdak}. 
\begin{lemma}
\label{lemma:failure_prob}
Let $D_{\max}$ be defined as in~\eqref{eq:lambda_max} and assume that $\mu \sim \Normal{0}{I_d}$.
Then  the inequality
$
 D_{\max}\|\mu\|_2 \leq R_0
$
holds for any $R_0>0$ with probability at least $1-\sqrt{\frac{d}{2\pi}}\frac{m(m-1)}{R_0}$.
\end{lemma}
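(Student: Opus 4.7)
The plan is to unpack the structure of $D_{\max}$ using the block-diagonal form of $\Lambda_k$ and $\Lambda(\mu)$, reduce the tail bound to a family of inner-product events indexed by pairs of clusters, and control each such event by a standard estimate on the angle between a Gaussian vector and a fixed unit direction.

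\emph{Step 1: Explicit form of $D_{\max}$.} By~\eqref{eq:clusterlambdak}--\eqref{eq:clusterlambdamu}, the matrix $(\Lambda_k-\lambda^{(k)}_i I)(\lambda_i(\mu)I - \Lambda(\mu))\pesudoinverse$ is diagonal, with its $(j,j)$-block equal to $\frac{\lambda^{(k)}_j-\lambda^{(k)}_i}{\lambda_i(\mu)-\lambda_j(\mu)}\,I_{n_j}$ for $j\neq i$ and zero for $j=i$. Taking the spectral norm and then maximizing over $(i,k)$ yields
\[
D_{\max} \;=\; \max_{\substack{i \neq j \\ k}} \frac{|\lambda^{(k)}_i - \lambda^{(k)}_j|}{|\lambda_i(\mu)-\lambda_j(\mu)|}.
\]

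\emph{Step 2: Reduction to inner-product bounds.} Set $v_{ij} := (\lambda^{(1)}_i-\lambda^{(1)}_j,\ldots,\lambda^{(d)}_i-\lambda^{(d)}_j)^{\transpose} \in \R^d$, which is nonzero by~\eqref{eq:clusterlambdak}. Since $\lambda_i(\mu)-\lambda_j(\mu) = \inner{\mu}{v_{ij}}$ and $|\lambda^{(k)}_i-\lambda^{(k)}_j| \leq \|v_{ij}\|_2$, multiplying $D_{\max}$ by $\|\mu\|_2$ yields
\[
D_{\max}\|\mu\|_2 \;\leq\; \max_{i<j} \frac{\|\mu\|_2}{|\inner{\mu}{u_{ij}}|}, \qquad u_{ij}:=v_{ij}/\|v_{ij}\|_2 \in \sphere{d}.
\]
The unit vectors $u_{ij}$ depend only on the data, not on $\mu$.

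\emph{Step 3: Union bound and spherical estimate.} A union bound over the $\binom{m}{2}$ unordered pairs reduces the problem to showing that, for any fixed $u \in \sphere{d}$,
\[
\Prob\!\left(\frac{\|\mu\|_2}{|\inner{\mu}{u}|} > R_0 \right) \;\leq\; \frac{2}{R_0}\sqrt{\frac{d}{2\pi}}.
\]
By the rotational invariance of $\Normal{0}{I_d}$, the direction $\mu/\|\mu\|_2$ is uniform on $\sphere{d}$ and independent of $\|\mu\|_2$, so $X_1 := \inner{\mu}{u}/\|\mu\|_2$ is distributed as the first coordinate of a uniform point on $\sphere{d}$, with density $f(x)=\frac{\Gamma(d/2)}{\sqrt{\pi}\,\Gamma((d-1)/2)}(1-x^2)^{(d-3)/2}$ on $[-1,1]$. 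For $d\geq 3$, $f$ attains its maximum at $x=0$, and Wendel's inequality $\Gamma(x+1/2)/\Gamma(x) \leq \sqrt{x+1/2}$ at $x=(d-1)/2$ gives $f(0) \leq \sqrt{d/(2\pi)}$; hence $\Prob(|X_1| < 1/R_0) \leq 2 f(0)/R_0 \leq 2\sqrt{d/(2\pi)}/R_0$. The degenerate cases $d=1,2$ are verified directly. Summing over the $m(m-1)/2$ pairs produces the stated bound $\sqrt{d/(2\pi)}\,m(m-1)/R_0$.

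\emph{Main obstacle.} The delicate point is obtaining the correct constant $\sqrt{d/(2\pi)}$ in the spherical estimate: a conditioning argument based solely on the density of $N(0,1)$ at zero, combined with $\mathbb{E}\|\mu\|_2 \leq \sqrt{d}$, introduces an extra factor $1/\sqrt{1-1/R_0^2}$ that spoils the bound. One must therefore work with the exact density of the first spherical coordinate and invoke Wendel's Gamma-function inequality. A secondary subtlety is recognizing that the events indexed by $(i,j)$ and $(j,i)$ coincide, so the union bound should run over $\binom{m}{2}=m(m-1)/2$ unordered pairs rather than $m(m-1)$ ordered ones.
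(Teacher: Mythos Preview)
Your proposal is correct and follows essentially the same route as the paper: rewrite $D_{\max}$ via the diagonal block structure, reduce to bounding $\|\mu\|_2/|\langle \mu,u_{ij}\rangle|$ for fixed unit vectors $u_{ij}$, apply a spherical small-ball estimate, and union-bound over the $\binom{m}{2}$ unordered pairs. The only difference worth noting is that the paper simply \emph{cites} the bound $\Prob\big(|\langle u,\mu/\|\mu\|_2\rangle|\le 1/R_0\big)\le \sqrt{2d/\pi}\,/R_0$ from~\cite{Dixon1983} and~\cite[Lemma~3.1]{hekressner2024randomized}, whereas you derive it from scratch via the explicit density of the first spherical coordinate and a Gamma-function inequality; both yield the same constant and the same final bound.
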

\begin{proof}
    The proof is along the lines of the proof of Theorem 3.4 in~\cite{hekressner2024randomized}. We include the proof for the sake of completeness. Following the notation introduced in \eqref{eq:clusterlambdak},
\[ D_{\max}  =
\max_{i,k}\{\|(\Lambda_k - \lambda^{(k)}_{i}I)(\lambda_i(\mu)I - \Lambda(\mu))\pesudoinverse\|_2\}
 = \max_{i,j,k, i\neq j} \big\{\big| \vec{\Delta}^{(ij)}_k / \inner{\vec{\Delta}^{(ij)}}{\mu} \big|\big\},
\]
with the nonzero vector $\vec{\Delta}^{(ij)} := [\lambda_j^{(1)}  - \lambda_i^{(1)}, \ldots, \lambda_j^{(d)} - \lambda_i^{(d)}] \in \reals^{d}$.  
For a fixed pair $i,j$ such that $i\neq j$, we have that
\begin{align*}
    &\Prob\big( \max_k \big\{\big| \| \mu \|_2\vec{\Delta}^{(ij)}_k / \inner{\vec{\Delta}^{(ij)}}{\mu} \big| \big\} \geq R_0 \big)\\
    = & \Prob\big( \big| \inner{\vec{\Delta}^{(ij)}}{\mu / \| \mu \|_2} / \vec{\Delta}^{(ij)}_{k^*} \big| \leq 1 / R_0 \big)\\
    \leq & \Prob\big( \big| \inner{\vec{\Delta}^{(ij)} / \|\vec{\Delta}^{(ij)}\|_2}{\mu / \| \mu \|_2} \big| \leq 1 / R_0 \big) \leq  \sqrt{\frac{2d}{\pi}}\frac{1}{R_0}
\end{align*}
where in the first equality $k^*:= \argmax_k\{| \vec{\Delta}^{(ij)}_k|\}$ and the last inequality is a standard result in the literature~\cite{Dixon1983}; see also~\cite[Lemma 3.1]{hekressner2024randomized}. Applying the union bound for the $m(m-1)/2$ different pairs of $i,j$ concludes the proof.
\end{proof}

\begin{lemma}
\label{lemma:control_theta}
With the eigenvalue components $\lambda^{(k)}_i$ defined as in~\eqref{eq:clusterlambdak}, suppose that
\begin{equation} 
\big\|\big[\lambda^{(1)}_i,\ldots, \lambda^{(d)}_i \big] \big\|_2 \|\theta\|_2 \leq C, \quad i = 1,\ldots, m,  \label{eq:condlambda}
\end{equation}  
holds for some $C> 0$. 
Then the constants $\lambda^\mu_{\max},\lambda_{\max}$ of Theorem~\ref{thm:structural_bound} satisfy
\begin{equation} \label{inequlambda}
 \lambda^\mu_{\max}\|E(\theta)\|_F \leq C \|\mu\|_2, \quad \lambda_{\max} \|E(\theta)\|_F \leq C.
\end{equation}
If $\theta \sim \Normal{0}{I_d}$ then the inequality~\eqref{eq:condlambda}, and thus~\eqref{inequlambda}, holds with probability at least $1-\sqrt{\frac{2d}{\pi}}\frac{m}{C}$.
\end{lemma}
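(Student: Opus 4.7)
The plan is to prove the deterministic claim directly from the hypothesis and then convert it into a probabilistic statement by identifying the condition~\eqref{eq:condlambda} with a standard small-ball event for a Gaussian vector.

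First I would bound $\|E(\theta)\|_F$. Since $E(\theta) = \theta_1 E_1 + \cdots + \theta_d E_d$, Cauchy--Schwarz together with the normalization $\fnorm{E_1}^2 + \cdots + \fnorm{E_d}^2 = 1$ assumed in Theorem~\ref{thm:structural_bound} yields $\|E(\theta)\|_F \leq \|\theta\|_2$. Next I would rewrite $\lambda_{\max}^\mu$ and $\lambda_{\max}$ in terms of the vectors $v_i := [\lambda_i^{(1)},\ldots,\lambda_i^{(d)}] \in \reals^d$. From the block-diagonal structure~\eqref{eq:clusterlambdak}--\eqref{eq:clusterlambdamu},
\[
\lambda_{\max}^\mu = \|\Lambda(\mu)\|_2 = \max_i |\inner{\mu}{v_i}| \le \|\mu\|_2 \max_i \|v_i\|_2, \qquad
\lambda_{\max} = \max_{i,k} |\lambda_i^{(k)}| \le \max_i \|v_i\|_2.
\]
Combining these two observations with the hypothesis~\eqref{eq:condlambda} gives $\lambda_{\max}^\mu \|E(\theta)\|_F \leq C\|\mu\|_2$ and $\lambda_{\max}\|E(\theta)\|_F \leq C$, establishing~\eqref{inequlambda}.

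For the probabilistic part, the key observation is that the vectors $v_i$ themselves depend on $\theta$, because the definition $\Lambda_k = D_k D(\theta)^{-1}$ together with the block structure implies $\lambda_i^{(k)} = d_i^{(k)}/\inner{\theta}{w_i}$, where $w_i = [d_i^{(1)},\ldots,d_i^{(d)}] \in \reals^d$ is the ($\theta$-independent) vector of the $i$-th distinct diagonal value of $D_1,\ldots,D_d$. Consequently $\|v_i\|_2 = \|w_i\|_2/|\inner{\theta}{w_i}|$, and the condition~\eqref{eq:condlambda} becomes equivalent to
\[
\frac{|\inner{\theta/\|\theta\|_2}{\,w_i/\|w_i\|_2}|}{1} \geq \frac{1}{C}, \qquad i = 1,\ldots,m.
\]

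The hard part of the argument (though here it is fairly mild) is handling the dependence of the $v_i$ on $\theta$; reformulating the hypothesis as an angle condition between $\theta$ and the fixed vectors $w_i$ eliminates this dependence. I would then apply the standard small-ball bound for Gaussian vectors (as cited in the proof of Lemma~\ref{lemma:failure_prob}, originating from~\cite{Dixon1983}), which states $\Prob(|\inner{\theta/\|\theta\|_2}{\,u}| < t) \leq \sqrt{2d/\pi}\, t$ for any unit vector $u \in \reals^d$. Applied with $t = 1/C$ and unit vector $w_i/\|w_i\|_2$ for each $i$, then combined by the union bound over $i = 1,\ldots,m$, this yields the failure probability bound $\sqrt{2d/\pi}\,m/C$, completing the proof.
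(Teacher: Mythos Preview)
Your proof is correct and follows essentially the same approach as the paper: bound $\|E(\theta)\|_F$ by Cauchy--Schwarz, bound $\lambda^\mu_{\max}$ and $\lambda_{\max}$ via the vectors $v_i$, then rewrite $\lambda_i^{(k)}$ as a ratio involving the fixed diagonal vector and $\langle\theta,w_i\rangle$ to reduce~\eqref{eq:condlambda} to a Dixon-type small-ball event handled by a union bound. Your presentation is slightly cleaner than the paper's (you make the dependence of $v_i$ on $\theta$ explicit and remove it via the fixed $w_i$), but there is no substantive difference.
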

\begin{proof}
Using $\|E_1\|_F^2 + \cdots + \|E_d\|_F^2 = 1$ and the Cauchy-Schwarz inequality, we obtain that $\|E(\theta)\|_F \le \|\theta\|_2$. Because of \[ \lambda_{\max}^\mu = \max_i \sum_k |\mu_k| |\lambda_i^{(k)}| \le \max_i \big\|\big[\lambda^{(1)}_i,\ldots, \lambda^{(d)}_i \big] \big\|_2 \|\mu\|_2,\]
we obtain the first inequality in~\eqref{inequlambda} from~\eqref{eq:condlambda}. Similarly, $\lambda_{\max} \|E(\theta)\|_F \le \max_{i,k} |\lambda_i^{(k)}| \|\theta\|_2$ implies the second inequality.

It remains to bound the probability that~\eqref{eq:condlambda} fails. For this purpose, we use that
$\lambda^{(k)}_i$ is a diagonal element of $D_k D(\theta)^{-1}$ and, hence, there is $j$ such that
$\lambda^{(k)}_i = D_k(j,j) / \langle \Xi_j, \theta \rangle$ with $\Xi_j = \begin{bmatrix}
    D_1(i,i) & \ldots & D_d(i,i)
\end{bmatrix}$. In turn, we have that
\[
 \Prob\big( \big\|\big[\lambda^{(1)}_i,\ldots, \lambda^{(d)}_i \big] \big\|_2 \|\theta\|_2 \geq C \big) = \Prob\big( \|\Xi_j\| \|\theta\|_2 / |\langle \Xi_j, \theta \rangle|  \geq C \big) \le \sqrt{\frac{2d}{\pi}} \frac{1}{C},
\]
where the last inequality is, again, a standard result in the literature~\cite{Dixon1983}.
Applying the union bound for $i = 1,\ldots,m$ concludes the proof.
\end{proof}

\begin{theorem}[Main theorem for regular families]
\label{thm:npd_prob_bound}
Under the setting of Theorem~\ref{thm:structural_bound}, assume that $\theta, \mu$ are independent Gaussian random vectors. Let $\tilde X$ 
be any invertible matrix such that $\tilde X^T \tilde A(\mu) \tilde X$ and $\tilde X^T \tilde A(\theta) \tilde X$ are diagonal. Then, for any $R > 0$, it holds that
    \begin{align*}
\Prob\Bigg(\frac{1}{\|\tilde X\|_2^2}\Big(\sum_{k=1}^{d}\fnormbig{\offdiag(\tilde{X}^T\tilde{A}_k\tilde{X})}^2\Big)^{1/2} &\leq (R^2 + 1) \epsilon  + \Ocal(\epsilon^2)\Bigg) \\ &\geq 1 - \sqrt{\frac{10}{\pi}} \frac{m^{3/2} d^{3/4}}{R}.
\end{align*}
\end{theorem}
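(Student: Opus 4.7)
The plan is to combine the deterministic structural estimate of Theorem~\ref{thm:structural_bound} with the probabilistic controls of Lemmas~\ref{lemma:failure_prob} and~\ref{lemma:control_theta}, introducing two auxiliary parameters $R_0, C > 0$ whose values are chosen at the end.

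First, applying Theorem~\ref{thm:structural_bound} and using $\|E(\mu)\|_F\leq \|\mu\|_2$ (which follows from $\sum_k \|E_k\|_F^2 = 1$ and Cauchy--Schwarz), one obtains deterministically
\[
\frac{1}{\|\tilde X\|_2^2}\Bigl(\sum_{k=1}^d \|\offdiag(\tilde X^T \tilde A_k \tilde X)\|_F^2\Bigr)^{1/2}
\leq \sqrt{d}\bigl(D_{\max}\|\mu\|_2 + (\lambda_{\max}^\mu D_{\max}+\lambda_{\max})\|E(\theta)\|_F\bigr)\epsilon + \epsilon + \Ocal(\epsilon^2).
\]

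Next, I would introduce the good events
\[
\mathcal{E}_1 := \{D_{\max}\|\mu\|_2 \leq R_0\},\qquad
\mathcal{E}_2 := \bigl\{\lambda_{\max}^\mu \|E(\theta)\|_F \leq C\|\mu\|_2,\ \lambda_{\max}\|E(\theta)\|_F\leq C\bigr\},
\]
so that on $\mathcal{E}_1\cap \mathcal{E}_2$ the right-hand side above is at most $\sqrt{d}(R_0 + CR_0 + C)\epsilon + \epsilon + \Ocal(\epsilon^2)$. Lemma~\ref{lemma:failure_prob} (which in fact holds for every fixed $\theta$, since the argument only uses the direction of $\mu$ on the sphere) yields $\Prob(\mathcal{E}_1^c) \leq \sqrt{d/(2\pi)}\,m(m-1)/R_0$, and Lemma~\ref{lemma:control_theta} yields $\Prob(\mathcal{E}_2^c) \leq \sqrt{2d/\pi}\,m/C$. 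Since $\mu$ and $\theta$ are independent, these two events can be union-bounded directly.

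Finally, it remains to select $R_0$ and $C$ to enforce $\sqrt{d}(R_0 + CR_0 + C) \leq R^2$ while minimizing the combined failure probability. A Lagrangian analysis of the dominant constraint $\sqrt{d}\,R_0 C \lesssim R^2$, with objective $\sqrt{d/(2\pi)}\,m(m-1)/R_0 + \sqrt{2d/\pi}\,m/C$, suggests the scaling $R_0 \propto R\sqrt{m}/d^{1/4}$ and $C \propto R/(\sqrt{m}\, d^{1/4})$; with this choice each of $\Prob(\mathcal{E}_1^c)$ and $\Prob(\mathcal{E}_2^c)$ is of order $m^{3/2} d^{3/4}/R$, and adding them gives the stated prefactor $\sqrt{10/\pi}$ after bookkeeping the exact numerical constants.

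The main obstacle will be this last balancing step. The additive terms $\sqrt{d}\,R_0$ and $\sqrt{d}\,C$, of order $R d^{1/4}\sqrt{m}$ and $R d^{1/4}/\sqrt{m}$ respectively, must also fit inside the $R^2$ budget, which forces $R_0 C$ to be chosen somewhat below the unconstrained Lagrangian optimum $R^2/\sqrt{d}$. It is precisely this slack that inflates the naive prefactor $2/\sqrt{\pi}$ to the $\sqrt{10/\pi}$ stated in the theorem; the remaining parts of the argument are routine.
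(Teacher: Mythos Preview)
Your outline matches the paper's proof almost exactly: combine Theorem~\ref{thm:structural_bound} with Lemmas~\ref{lemma:failure_prob} and~\ref{lemma:control_theta} via a union bound, then choose $R_0$ and $C$. The paper takes $R_0 = R\sqrt{m}/\sqrt{5\sqrt{d}}$ and $C = 2R_0/m$, which is the scaling you arrived at.

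Where your plan diverges is the resolution of the ``main obstacle.'' You propose absorbing the additive terms $\sqrt{d}\,R_0$ and $\sqrt{d}\,C$ into the $R^2$ budget by taking $R_0C$ below the Lagrangian optimum. But with $R_0\asymp R\sqrt{m}/d^{1/4}$ one has $\sqrt{d}\,R_0\asymp R\sqrt{m}\,d^{1/4}$, which exceeds $R^2$ whenever $R\lesssim \sqrt{m}\,d^{1/4}$; shrinking the \emph{product} $R_0C$ does nothing to fix this, and shrinking $R_0$ itself destroys the failure-probability bound. The paper's mechanism is different and is the step you are missing: it first observes that the claimed probability lower bound $1-\sqrt{10/\pi}\,m^{3/2}d^{3/4}/R$ is nonpositive (hence the statement is vacuous) whenever $R_0<m^2$, using $d\ge 2$. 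One may therefore assume $R_0\ge m^2\ge 1$, and in that regime
\[
\sqrt{d}\,(R_0+CR_0+C)=\frac{\sqrt{d}\,R_0^2}{m}\Big(\frac{m}{R_0}+2+\frac{2}{R_0}\Big)\le \frac{5\sqrt{d}\,R_0^2}{m}=R^2.
\]
It is this restriction to $R_0\ge m^2$, not a Lagrangian adjustment of $R_0C$, that produces the factor $5$ and hence the constant $\sqrt{10/\pi}$.
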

\begin{proof}
Given $R>0$, we set $R_0 = R \sqrt{m} / \sqrt{5 \sqrt{d}} > 0$. By the union bound, the bounds of Lemma~\ref{lemma:failure_prob} for $R_0$ and  Lemma~\ref{lemma:control_theta} for $C = 2 R_0 / m$ hold, with probability at least
\begin{equation} \label{eq:success}
 1-\sqrt{\frac{d}{2\pi}}\frac{m(m-1)}{R_0} - \sqrt{\frac{2d}{\pi}}\frac{m}{C} \ge 
 1-\sqrt{\frac{2d}{\pi}}\frac{m^2}{R_0}  = 1 - \sqrt{\frac{10}{\pi}} \frac{m^{3/2} d^{3/4}}{R},
\end{equation}
Because $d \ge 2$, this bound becomes negative when $R_0 < m^2$ and, in turn, the result of the theorem trivially holds. We may therefore assume $R_0 \ge m^2$ in the following.
Since the invertibility of $A(\theta)$ holds almost surely, one can apply
Theorem~\ref{thm:structural_bound}, which yields the following bound:
\textcolor{black}{\begin{align*}
&\frac{1}{\|\tilde X\|_2^2} \Big(\sum_{k=1}^{d}\big\|\offdiag(\tilde X^T \tilde A_k \tilde X) \big\|_F^2 \Big)^{1/2} \\
\leq& \sqrt{d} \big( D_{\max} \|E(\mu)\|_F + ( \lambda^\mu_{\max} D_{\max} + \lambda_{\max})  \|E(\theta)\|_F\big)\epsilon + \epsilon + \mathcal O(\epsilon^2) \\
\le& \sqrt{d} ( R_0 + C R_0 + C)\epsilon + \epsilon + \mathcal O(\epsilon^2) \\
=& \sqrt{d} \big( m / R_0 + 2  + 2 / R_0\big)R_0^2/m\cdot \epsilon + \epsilon + \mathcal O(\epsilon^2) \\
\le & 5\sqrt{d}   R^2_0 /m \cdot \epsilon + \epsilon + \mathcal O(\epsilon^2)  \\
 = & ( R^2 + 1 ) \epsilon + \mathcal O(\epsilon^2).
\end{align*}}%
Here, the second inequality follows from the bounds of Lemmas~\ref{lemma:failure_prob} and~\ref{lemma:control_theta}. The third inequality
uses that $R_0 \ge m^2 \geq 1$.
\end{proof}

The bound of Theorem~\ref{thm:npd_prob_bound} implies that the output error is $\Ocal(m^3 d^{3/2} \delta^{-2} \epsilon)$, with failure probability at most $\delta$. In practice, the empirical output error is observed to be $\Ocal(\delta^{-1})$ for regular families, which remains open to be proved. However, in the positive definite case, this bound improves to $\Ocal(m^2d^2\delta^{-1}\epsilon)$ when making a fixed choice for $\theta$. Note that, $m$, as introduced in \eqref{eq:clusterlambdak}, is bounded by the matrix size $n$.
\begin{theorem}[Main theorem for PD families]
\label{thm:pd_prob_bound}
Under the setting of Theorem~\ref{thm:structural_bound}, assume that $\mu$ is a Gaussian random vector and $\theta = \big[1/d,\ldots,1/d \big]$. Further assume that $\Acal$ is a positive definite family. If $\tilde X$ 
is any invertible matrix such that $\tilde X^T \tilde A(\mu) \tilde X$ and $\tilde X^T \tilde A(\theta) \tilde X$ are diagonal then, for any $R > 0$, we have
\begin{align*}
\Prob\Big(\frac{1}{\|\tilde X\|_2^2}\Big(\sum_{k=1}^{d}\fnormbig{\offdiag(\tilde{X}^T\tilde{A}_k\tilde{X})}^2\Big)^{1/2} &\leq (1+R)\epsilon  + \Ocal(\epsilon^2)\Big) \\ &\geq 1 - \frac{3}{\sqrt{2\pi}} \frac{d^2m(m-1)}{R}.
\end{align*}
\end{theorem}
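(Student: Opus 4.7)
The plan is to follow the template of Theorem~\ref{thm:npd_prob_bound}, but to exploit the PD assumption together with the specific choice $\theta = [1/d,\ldots,1/d]$ in order to sharpen the deterministic quantities entering the structural bound of Theorem~\ref{thm:structural_bound}. This removes the need for a probabilistic control on $\theta$ (Lemma~\ref{lemma:control_theta}) and isolates the randomness in $\mu$.

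First, since $\theta \in \reals_{>0}^d$ and $\Acal$ is PD, $A(\theta)$ is positive definite. I would use this to rescale the SDC diagonalizer $X$ of $\Acal$ by a positive diagonal matrix so that $X^T A(\theta) X = I$. Under this normalization each $D_k = X^T A_k X$ is diagonal with \emph{nonnegative} entries (because each $A_k$ is PSD in a PD family and $X$ is invertible), and $\sum_{k=1}^d D_k = d I$. Hence the $\lambda_i^{(k)}$ defined in~\eqref{eq:clusterlambdak} coincide with $D_k(i,i) \ge 0$ and satisfy $\sum_k \lambda_i^{(k)} = d$ for every $i$. This immediately gives, deterministically,
\begin{equation*}
 \lambda_{\max} \le d \quad \text{and} \quad \bigl\|\bigl[\lambda_i^{(1)},\ldots,\lambda_i^{(d)}\bigr]\bigr\|_2 \le \sum_{k} \lambda_i^{(k)} = d,
\end{equation*}
so by Cauchy-Schwarz $\lambda^\mu_{\max} \le d \|\mu\|_2$. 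In parallel, from $\sum_k \fnorm{E_k}^2 = 1$ and Cauchy-Schwarz, $\fnorm{E(\theta)} \le 1/\sqrt{d}$ and $\fnorm{E(\mu)} \le \|\mu\|_2$.

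Next, I would combine these deterministic bounds with the event $D_{\max} \|\mu\|_2 \le R_0$ supplied by Lemma~\ref{lemma:failure_prob} (which holds with probability at least $1 - \sqrt{d/(2\pi)}\, m(m-1)/R_0$) and plug everything into Theorem~\ref{thm:structural_bound}. A direct computation turns its structural estimate into
\begin{equation*}
 \frac{1}{\|\tilde X\|_2^2} \Bigl( \sum_{k=1}^d \fnorm{\offdiag(\tilde X^T \tilde A_k \tilde X)}^2 \Bigr)^{1/2} \le \bigl( (d+\sqrt{d})\, R_0 + d + 1 \bigr) \epsilon + \Ocal(\epsilon^2).
\end{equation*}
Setting $R_0 := (R-d)/(d+\sqrt{d})$ matches the right-hand side to $(1+R)\epsilon + \Ocal(\epsilon^2)$. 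Substituting this $R_0$ into Lemma~\ref{lemma:failure_prob} and using, in the nontrivial regime $R \ge 2d$, the elementary estimates $R-d \ge R/2$ and $\sqrt{d}(d+\sqrt{d}) = d^{3/2} + d \le 3d^2/2$ (valid for $d \ge 2$) yields a failure probability bounded by $\frac{3}{\sqrt{2\pi}} \frac{d^2 m(m-1)}{R}$, as claimed. For $R < 2d$ the stated probability lower bound is easily checked to be non-positive and the statement is vacuous.

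The main obstacle is not conceptual but bookkeeping: one has to justify the normalization $X^T A(\theta) X = I$ (the unique place where PD is used essentially) and then show that the residual deterministic term $(d+1)\epsilon$ arising in the structural bound can be absorbed into $(1+R)\epsilon$ without degrading the final $d^2 m(m-1)/R$ dependence. Once the normalization is in place, the PD structure turns the quantities that were random in the regular case (through Lemma~\ref{lemma:control_theta}) into sharp deterministic bounds, and the remainder of the argument is a straightforward specialization of the proof of Theorem~\ref{thm:npd_prob_bound}.
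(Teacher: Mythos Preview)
Your proof is correct and follows essentially the same approach as the paper: both exploit the PD assumption with the fixed $\theta=[1/d,\ldots,1/d]$ to make the conclusions of Lemma~\ref{lemma:control_theta} hold \emph{deterministically} with $C=d$, then apply Lemma~\ref{lemma:failure_prob} for the randomness in $\mu$, plug into Theorem~\ref{thm:structural_bound}, and choose $R_0$ so that the failure probability becomes $\tfrac{3}{\sqrt{2\pi}}\tfrac{d^2 m(m-1)}{R}$. The only cosmetic differences are that the paper argues directly $\lambda_i^{(k)}\in[0,d]$ from $\Lambda_k=D_kD(\theta)^{-1}$ (so your normalization $X^TA(\theta)X=I$ is valid but unnecessary), and the paper uses the cruder intermediate bound $\sqrt{d}(R_0+dR_0+d)\le 3d^{3/2}R_0$ with the substitution $R_0=R/(3d^{3/2})$, whereas your sharper estimate $(d+\sqrt{d})R_0+d+1$ leads to $R_0=(R-d)/(d+\sqrt{d})$; both routes land on the same final bound.
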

\begin{proof}
When $\theta = \big[1/d,\ldots,1/d \big],$ with the notation introduced at the beginning of Section \ref{sub:robust_theorems}, $\Lambda_k =D_k D(\theta)^{-1}$ and thus  $\lambda_i^{(k)} \leq d$. 
Then the bound of Lemma~\ref{lemma:control_theta} always holds with $C = d$.
  Moreover, for $R_0 > 0$, the bound of Lemma~\ref{lemma:failure_prob} holds with probability at least
\[1 - \sqrt{\frac{d}{2\pi}}\frac{m(m-1)}{R_0}.\] 
Noting that the invertibility of $A(\theta)$ is satisfied because $\Acal$ is a PD family, we can insert the bounds of the lemmas into the result of Theorem~\ref{thm:structural_bound} and obtain that the off-diagonal error scaled by $\|\tilde X\|_2^{-2}$ is, up to $\mathcal O(\epsilon^2)$, bounded by 
\begin{eqnarray*}
    && \sqrt{d} \big( D_{\max} \|E(\mu)\|_F + ( \lambda^\mu_{\max} D_{\max} + \lambda_{\max})  \|E(\theta)\|_F\big)\epsilon + \epsilon \\ 
    &\le& \sqrt{d} (R_0 + d R_0 + d)\epsilon + \epsilon \leq 3d^{3/2}R_0 \epsilon + \epsilon.
    \end{eqnarray*}
    Performing the substitution $R_0 = R / (3 d^{3/2})$ completes the proof.
\end{proof}

Theorem \ref{thm:pd_prob_bound} provides an indication that positive-definiteness can improve the reliability of SDC. A similar insight appears in the context of closely related SFS-CP decomposition, introduced in Section~\ref{subsec:cpd}. Specially, given an SFS rank-$r$ tensor $\mathsf A$, Theorem 2.2 in~\cite{{Evert23}} shows that the optimal positive-definite linear combination of frontal slices of $\mathsf A$ controls the size of the neighborhood around $\mathsf A$ where every tensor admits a best SFS rank-$r$ approximation. It is worth noting that Theorem~\ref{thm:pd_prob_bound} requires the family to be positive-definite (Definition~\ref{def:pd_family}), whereas results in~\cite{{Evert23}} rely on the existence of just one positive-definite linear combination of frontal slices.

For SDC problems arising in BSS, it is common to utilize only one matrix from a PD family (instead of forming an average like in Theorem \ref{thm:pd_prob_bound}) during the so-called prewhitening~\cite{Belouchrani1997}.
In other words, the pair $(\tilde A(\mu), \tilde A_k)$, with a fixed matrix $\tilde A_k$, is simultaneously diagonalized by congruence for determining the SDC transformation for the whole family. However, the result and proof of Theorem \ref{thm:pd_prob_bound} suggest that considering only one matrix, no matter how well-chosen, instead of the average $\tilde A(\theta)$ could potentially lead to a large error, as indeed sometimes observed in the BSS community~\cite{cardoso1994performance, uwedge, Bouchard20}.

\subsection{Controlling the condition number for a PD family}

While the results from Theorems~\ref{thm:npd_prob_bound} and~\ref{thm:pd_prob_bound} ensure that the transformation matrix  $\tilde X$ is invertible and the bounds do not depend on the norm of $\tilde X$, they do not guarantee a well-conditioned $\tilde X$. In the PD case, an asymptotic bound on the condition number $\kappa(\tilde X) = \|\tilde X\|_2 \|\tilde X^{-1}\|_2$ can be established for a specific variant of Algorithm~\ref{alg:RSDC}.

Consider the symmetric positive definite SDC family $\Acal = \{A_k\}$ and the perturbed family $\tilde \Acal = \{A_k + \epsilon E_k\}$. Instead of choosing $\theta$ randomly in the two linear combinations $\tilde A(\theta)$ and $\tilde A(\mu)$, we fix $\theta=[1/d,\ldots,1/d]$ as done in Theorem~\ref{thm:pd_prob_bound}.  By positive definiteness,  $A(\theta)$ is positive definite. While the family $\tilde \Acal$ might not be positive definite, $\tilde A(\theta)$ is still positive definite for sufficiently small $\epsilon$, which allows us to compute the Cholesky factorization $\tilde A(\theta) = \tilde L \tilde L^T$, with $\tilde L$ lower triangular.
Then, by the spectral decomposition, we determine an orthogonal matrix $\tilde Q$ such that $\tilde Q^T\tilde L^{-1}\tilde A(\mu) \tilde L^{-T}\tilde Q$ is diagonal. As a result, the matrix
\begin{equation} \label{eq:X_cholesky}
    \tilde X =\tilde  L^{-T} \tilde  Q
\end{equation} simultaneously diagonalizes $\tilde A(\mu)$ and $\tilde A(\theta)$ by congruence. We will use the described procedure to realize Line 3 of Algorithm~\ref{alg:RSDC} for determining $\tilde X$.
The following lemma shows that $\tilde X$ is well conditioned, for sufficiently small $\epsilon$, if there exists at least one well-conditioned convex combination in the family $\Acal$. For simplicity, we also assume that all matrices $A_k$ have spectral norm $1$, which can always be achieved by scaling.
\begin{lemma}\label{lemma:control_condition}
With the notation and assumptions introduced above, suppose that $\|A_1\|_2 = \cdots = \|A_d\|_2 = 1$, the matrix $\tilde X$ defined in~\eqref{eq:X_cholesky} satisfies, for $\epsilon$ sufficiently small,
    \[\kappa(\tilde X) \leq d \sqrt{\kappa^*} + \Ocal(\epsilon).\]
     where 
    $\kappa^*:= \min_{\mu \in \Delta^d}\{\kappa(A(\mu))\}$ and $\Delta^d := \{\mu\in\reals^d,\mu_1 + \cdots + \mu_d =1, \mu_k \geq 0\}$.
\end{lemma}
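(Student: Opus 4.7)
The plan is to show that $\kappa(\tilde X)$ coincides with $\sqrt{\kappa(\tilde A(\theta))}$, then bound this square root in terms of the best-conditioned convex combination via a simple operator inequality.

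First, I will exploit the explicit form $\tilde X = \tilde L^{-T}\tilde Q$. Since $\tilde Q$ is orthogonal,
\[
\tilde X \tilde X^T = \tilde L^{-T} \tilde Q \tilde Q^T \tilde L^{-1} = \tilde L^{-T}\tilde L^{-1} = \tilde A(\theta)^{-1},
\]
so the singular values of $\tilde X$ are the reciprocal square roots of the eigenvalues of $\tilde A(\theta)$. This yields $\kappa(\tilde X)^2 = \kappa(\tilde A(\theta))$. Next, because $A(\theta)$ is positive definite (by the PD assumption) and $\tilde A(\theta) = A(\theta) + \epsilon E(\theta)$, standard perturbation of eigenvalues gives $\kappa(\tilde A(\theta)) = \kappa(A(\theta)) + \Ocal(\epsilon)$ for $\epsilon$ small enough, and taking square roots yields $\kappa(\tilde X) = \sqrt{\kappa(A(\theta))} + \Ocal(\epsilon)$.

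It therefore suffices to prove the \emph{unperturbed} bound $\kappa(A(\theta)) \le d^2 \kappa^*$. The upper bound on $\|A(\theta)\|_2$ is the easy part: from $\|A_k\|_2 = 1$ for all $k$ and the triangle inequality, $\|A(\theta)\|_2 \le 1$. The main step is a lower bound on $\lambda_{\min}(A(\theta))$. Let $\mu^\star \in \Delta^d$ attain $\kappa^* = \kappa(A(\mu^\star))$. Because each $A_k \succeq 0$ and each coordinate $\mu^\star_k \in [0,1]$, we have $A_k \succeq \mu^\star_k A_k$, and summing over $k$ gives
\[
\sum_{k=1}^d A_k \;\succeq\; \sum_{k=1}^d \mu^\star_k A_k \;=\; A(\mu^\star),
\qquad \text{hence}\qquad A(\theta) \;\succeq\; \tfrac{1}{d} A(\mu^\star).
\]
Consequently $\lambda_{\min}(A(\theta)) \ge \lambda_{\min}(A(\mu^\star))/d$.

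Finally, I will bootstrap by lower-bounding $\|A(\mu^\star)\|_2$: from $A(\mu^\star) \succeq \mu^\star_k A_k$ one gets $\|A(\mu^\star)\|_2 \ge \mu^\star_k$ for every $k$, and since $\mu^\star \in \Delta^d$ the largest coordinate is at least $1/d$, so $\|A(\mu^\star)\|_2 \ge 1/d$. Combining with $\lambda_{\min}(A(\mu^\star)) = \|A(\mu^\star)\|_2/\kappa^*$ yields $\lambda_{\min}(A(\mu^\star)) \ge 1/(d\kappa^*)$, and therefore $\lambda_{\min}(A(\theta)) \ge 1/(d^2 \kappa^*)$. Together with $\|A(\theta)\|_2 \le 1$ this gives $\kappa(A(\theta)) \le d^2 \kappa^*$, and taking the square root and adding the $\Ocal(\epsilon)$ perturbation term establishes the claim. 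The only delicate point is the double application of the PSD monotonicity $A_k \succeq \mu^\star_k A_k$—once to relate $A(\theta)$ to $A(\mu^\star)$ and once to guarantee $\|A(\mu^\star)\|_2 \ge 1/d$—without which the naive estimate would lose the needed factor of $d$.
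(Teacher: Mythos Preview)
Your proof is correct and follows essentially the same route as the paper's: both establish $\kappa(\tilde X)=\sqrt{\kappa(\tilde A(\theta))}$ and then prove $\kappa(A(\theta))\le d^2\kappa^*$ via the two inequalities $\lambda_{\min}(A(\theta))\ge \tfrac1d\lambda_{\min}(A(\mu))$ and $\|A(\mu)\|_2\ge \max_k\mu_k\ge 1/d$ for $\mu\in\Delta^d$. The only cosmetic difference is that you phrase these via the Loewner order $A(\theta)\succeq \tfrac1d A(\mu^\star)$ and $A(\mu^\star)\succeq \mu^\star_k A_k$, whereas the paper writes the same inequalities in terms of Rayleigh quotients $\langle\mu,\Xi_v\rangle$; the underlying estimates are identical.
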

\begin{proof}
By definition,
   \begin{equation}\label{eq:cond_opt}
       \kappa^* = \min_{\mu \in \Delta^d} \frac{\max_{v \in \mathcal{S}^{n-1}}\inner{\mu}{\Xi_v} }{\min_{v \in \mathcal{S}^{n-1}}\inner{\mu}{\Xi_v} }
   \end{equation} 
   where $\Xi_v:= [v^TA_1v,\ldots,v^TA_dv] \in \reals^d$ and $\mathcal{S}^{n-1} = \{ v \in \R^n\colon \|v\|_2 = 1\}$. For any $\mu \in \Delta^d$, the denominator in~\eqref{eq:cond_opt} is bounded from above by
   \begin{equation*}
      \min_{v \in \mathcal{S}^{n-1}}\{\inner{\mu}{\Xi_v}\}  \leq \min_{v \in \mathcal{S}^{n-1}}\{\|\mu\|_1\|\Xi_v\|_\infty\}  \leq d\cdot  \min_{v \in \mathcal{S}^{n-1}}\{\inner{\theta}{\Xi_v}\}.
   \end{equation*}
With the following lower bound for the numerator in~\eqref{eq:cond_opt}, \[
\max_{v \in \mathcal{S}^{n-1}}\{\inner{\mu}{\Xi_v}\} \geq \max_k\{\mu_k\|A_k\|_2\} = \max_k\{\mu_k\} \ge 1/d,
\]
it thus holds that 
\[
 \kappa^* \ge \frac{1}{d^2 \cdot \min_{v \in \mathcal{S}^{n-1}}\{\inner{\theta}{\Xi_v}\} \}} \ge d^2 \kappa(A(\theta)),
\]
where we used $\|A(\theta)\|_2 \le 1$.
%
%
%
   Since $\kappa(\tilde A( \theta)) \leq \kappa(A(\theta)) + \Ocal(\epsilon)$, 
\[\kappa(\tilde X) = \kappa(\tilde L) = \kappa(\tilde A(\theta))^{1/2}\leq  \kappa(A(\theta))^{1/2} + \Ocal(\epsilon) \leq d \sqrt{\kappa^*} + \Ocal(\epsilon).\]
\end{proof}

\section{Implementation details}
\label{sec:implementation_details}

In this section, we discuss important implementation details and improvements for Algorithm~\ref{alg:RSDC}.

\subsection{RSDC details}


Line 3 of Algorithm \ref{alg:RSDC} requires the simultaneous diagonalization by congruence of two random linear combinations $\big(\tilde A(\theta), \tilde A(\mu)\big)$ for a nearly SDC family $(\tilde A_1, \ldots, \tilde A_d)$. As seen for the matrix pair~\eqref{example:nonsdc}, this might not be possible even for arbitrarily small perturbations of SDC families. Thus, one needs to assume that $\big(\tilde A(\theta), \tilde A(\mu)\big)$ is SDC. If, additionally, this pair is regular, there is a strong link between the transformation matrix and the matrix of eigenvectors.

\begin{lemma}\label{prop:diag_iff_sdc} Let $(A,B)$ be a
    symmetric regular SDC pair. Then the following hold:
    \begin{enumerate}
        \item [(i)] If $X$ is an invertible matrix such that $X^TAX,X^TBX$ are diagonal then $X$ is a matrix of eigenvectors of $(A,B)$, that is, there are diagonal matrices $D_A$, $D_B$ such that $AX D_B = B X D_A$.
        \item[(ii)] If $X$ is an invertible matrix of eigenvectors of $(A,B)$ then there exists an orthogonal matrix $Q$ such that $(XQ)^TA (XQ),(XQ)^TB (XQ)$ are diagonal.
    \end{enumerate}   
\end{lemma}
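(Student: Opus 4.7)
Part (i) should yield to a direct computation. The plan is to set $D_A := X^T A X$ and $D_B := X^T B X$, both diagonal by hypothesis, and rewrite these identities as $AX = X^{-T} D_A$ and $BX = X^{-T} D_B$. Multiplying the first on the right by $D_B$, the second by $D_A$, and using that diagonal matrices commute yields
\[ AX D_B = X^{-T} D_A D_B = X^{-T} D_B D_A = BX D_A, \]
which is exactly the eigenvector relation. Regularity of $(A,B)$ is not strictly needed to derive this identity, but it ensures that no column $x_j$ satisfies $(D_A)_{jj}=(D_B)_{jj}=0$ (which would force $x_j \in \ker(A) \cap \ker(B) = \{0\}$), so every column of $X$ is a genuine generalized eigenvector with a well-defined projective eigenvalue.

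For part (ii), my plan is to compare $X$ against a ``reference'' diagonalizer and exploit the induced block structure. First I would pick an invertible $Y$ with $Y^T A Y = \Lambda_A$ and $Y^T B Y = \Lambda_B$ diagonal, which exists by the SDC hypothesis. By regularity, for each $i$ the pair $((\Lambda_A)_{ii},(\Lambda_B)_{ii})$ is nonzero, so the columns of $Y$ carry well-defined projective eigenvalues; after a permutation (absorbable into the final $Q$, since permutation matrices are orthogonal), consecutive columns of $Y$ and of $X$ are grouped by common eigenvalue, making $\Lambda_A$ and $\Lambda_B$ block-scalar with block sizes $n_1, \ldots, n_p$.

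Next, writing $X = YS$ for a uniquely determined invertible $S$, I would translate the eigenvector identity $AX D_B = BX D_A$ from part (i) into $\Lambda_A S D_B = \Lambda_B S D_A$. Examining this entrywise, $S_{ij} \neq 0$ forces the projective equality $((\Lambda_A)_{ii}:(\Lambda_B)_{ii}) = ((D_A)_{jj}:(D_B)_{jj})$, so $S_{ij}$ must vanish whenever row $i$ and column $j$ belong to different eigenvalue blocks. Consequently $S$ is block diagonal, and $\tilde A := X^T A X = S^T \Lambda_A S$ and $\tilde B := X^T B X = S^T \Lambda_B S$ are block diagonal with $k$-th blocks $\alpha_k S_k^T S_k$ and $\beta_k S_k^T S_k$, respectively --- scalar multiples of the same symmetric positive definite matrix. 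An orthogonal diagonalization $Q_k^T (S_k^T S_k) Q_k$ of each block, assembled into $Q := \diag(Q_1,\ldots,Q_p)$, then simultaneously diagonalizes $\tilde A$ and $\tilde B$.

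The step I expect to need the most care is the entrywise argument pinning down the block structure of $S$, because the pairs $((D_A)_{jj},(D_B)_{jj})$ need only agree with the eigenvalue pairs $(\alpha_k,\beta_k)$ \emph{projectively}; the per-column rescaling then has to be absorbed into $S_k$ rather than into any block-scalar form of $D_A$ or $D_B$. Once this scaling is correctly tracked, the remaining pieces --- proportionality of the blocks of $\tilde A$ and $\tilde B$ and orthogonal diagonalization of each PSD block via the spectral theorem --- are routine.
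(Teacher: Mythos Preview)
Your argument for part (i) is exactly the paper's. For part (ii) your plan is sound but takes a genuinely different route. The paper never introduces a reference diagonalizer $Y$: instead it picks nonzero $\mu_A,\mu_B$ with $D(\mu):=\mu_A D_A+\mu_B D_B$ invertible (this is where regularity enters), rewrites the eigenvector relation as $X^T(\mu_A A+\mu_B B)X\,D_A D(\mu)^{-1}=X^TAX$, and from this identity together with the symmetry of both sides shows directly that $X^TAX$ commutes with $X^T(\mu_AA+\mu_BB)X$ --- hence with $X^TBX$ --- so the two symmetric matrices admit a common orthogonal diagonalizer $Q$. This is shorter and, notably, does not actually consume the SDC hypothesis; it effectively derives SDC from regularity plus the existence of a full set of eigenvectors. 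Your approach, by contrast, spends the SDC hypothesis to obtain $Y$, but repays this with an explicit description of $Q$ as block-diagonal along the eigenspace decomposition.

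One technical correction to your plan: over $\reals$, grouping the columns of $Y$ by projective eigenvalue need not make $\Lambda_A,\Lambda_B$ block-\emph{scalar}. Congruence rescaling of a column multiplies the corresponding diagonal entry by a positive square, so within block $k$ you can only normalize to $\Lambda_A|_k=\alpha_k\Sigma_k$, $\Lambda_B|_k=\beta_k\Sigma_k$ for a common sign matrix $\Sigma_k=\diag(\pm1)$; for instance, $A=B=\diag(1,-1)$ has a single eigenvalue block on which no real congruence makes $\Lambda_A$ scalar. Your argument survives this unchanged: the $k$-th blocks of $X^TAX$ and $X^TBX$ become $\alpha_k S_k^T\Sigma_k S_k$ and $\beta_k S_k^T\Sigma_k S_k$, still symmetric and proportional, so orthogonally diagonalizing the symmetric matrix $S_k^T\Sigma_k S_k$ furnishes $Q_k$.
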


\begin{proof}
(i) Defining $D_A := X^TAX$, $D_B := X^TBX = D_B$, we  immediately have that
$AXD_B = X^{-T} D_A D_B = X^{-T} D_B D_A  = BX D_A$.

(ii) Considering the relation $AX D_B = B X D_A$ for an eigenvector matrix $X$, the regularity assumption implies that there is a linear combination $D(\mu):=\mu_A D_A + \mu_B D_B$ with $\mu_A \neq 0$ and $\mu_B \neq 0$ such that $D(\mu)$ is invertible. Then $X^T ( \mu_A  A + \mu_B B) X D_A D(\mu)^{-1}  = X^T A X $ implies that
\begin{align*}
      X^T A X  \cdot  X^T ( \mu_A  A + \mu_B B) X &= X^T ( \mu_A  A + \mu_B B) X D_A D(\mu)^{-1} X^T ( \mu_A  A + \mu_B B) X \\
      &= X^T ( \mu_A  A + \mu_B B) X D(\mu)^{-1} D_A X^T ( \mu_A  A + \mu_B B) X \\
      &= X^T ( \mu_A  A + \mu_B B) X \cdot X^T A X,
   \end{align*}
where we used the symmetry of the involved factors in the last equality. Therefore, $X^TAX$ and $X^TBX$ commute, implying that there exists an orthogonal matrix $Q$ such that $Q^TX^TAXQ$ and $Q^TX^TBXQ$ are diagonal. 
\end{proof}
Lemma~\ref{prop:diag_iff_sdc} implies for a regular symmetric pair that being SDC is equivalent to being \emph{diagonalizable} \cite[p.297]{StewartSun1990}.
If all eigenvalues are simple then $X$ is uniquely determined up to column reordering and scaling. See also \cite[Theorem 2]{TensorDforSP} for a more detailed discussion on the conditions under which the generalized eigenvalue problem approach yields a unique $X$. This implies that the matrix $Q$ in Lemma~\ref{prop:diag_iff_sdc} (ii) can be chosen to be the identity matrix. In other words, solving the generalized eigenvalue problem
\begin{equation} \label{eq:gep}
 \tilde A(\theta) \tilde X \Theta =  \tilde A(\mu) \tilde X \Lambda
\end{equation}
directly gives the matrix $\tilde X$ that diagonalizes $(\tilde A(\mu), \tilde A(\theta))$ by congruence. In general, this does not hold and $Q$ can be computed by jointly diagonalizing the commuting symmetric matrices $\tilde X^T \tilde A(\mu) \tilde X$ and $\tilde X^T \tilde A(\theta) \tilde X$, using the algorithms from~\cite{hekressner2024randomized,Sutton23}. For none of the experiments reported in Section~\ref{sec:numerical_experiments}, this orthogonal joint diagonalization step was necessary. Additionally, generlized eigenvalues are generically distinct. Thus, we only use the generalized eigenvalue solver {\tt dggev} by LAPACK~\cite{LAPACK}, which is based on the QZ algorithm~\cite{kressner05,matrixcomputation}, to obtain $\tilde X$ from~\eqref{eq:gep} and keep the orthogonal joint diagonalization step optional. As all the matrices are real, all possible complex eigenvalues and eigenvectors of \eqref{eq:gep}  will appear in complex conjugated pairs. For a
complex eigenvalue, {\tt dggev} returns the real and imaginary parts of the corresponding eigenvector.

The success probability of Algorithm \ref{alg:RSDC} can be easily boosted by considering several independent random linear combinations and choosing the candidate that minimizes the off-diagonal error~\eqref{eq:off_diagonal_loss}. As~\eqref{eq:off_diagonal_loss} is not invariant under column scaling,
it is necessary to normalize the columns of the returned transformation matrix before comparing the quality of different trials.

\begin{algorithm}[ht]
    \caption{\textbf{R}andomized \textbf{S}imultaneous \textbf{D}iagonalization via \textbf{C}ongruence (RSDC)}
    \textbf{Input:} \text{Nearly SDC family $(\tilde{A}_1,\ldots,\tilde{A}_d)$, number of trials $L$.}\\
     \textbf{Output:} \text{Invertible matrix $\tilde{X}$ such that $\tilde{X}^T \tilde A_k \tilde{X}$ is nearly diagonal for $k = 1,\ldots,d$.} \\[-0.5cm]
    \begin{algorithmic}[1]
    \label{alg:detailed_RSDC}
    \FOR{$i=1$ to $L$}
        \STATE Draw $\mu, \theta \in \R^d$ from specific distribution.
        \STATE Compute $\tilde{A}(\mu) = \mu_1 \tilde A_1 + \cdots + \mu_d \tilde A_d$, $\tilde{A}(\theta) = \theta_1 \tilde A_1 + \cdots + \theta_d \tilde A_d$.
        \STATE Compute matrix of eigenvectors $\tilde X_i$ for $(\tilde A(\mu), \tilde A(\theta))$ via QZ algorithm.
        \STATE [optional] Compute orthogonal matrix $\tilde Q$ that jointly diagonalizes $\tilde X_i^T \tilde A(\mu) \tilde X_i$, $\tilde X_i^T \tilde A(\theta) \tilde X_i$ and set $\tilde X_i \leftarrow \tilde X_i \tilde Q$.
        \STATE Normalize $\tilde X_i$ so that each column has norm $1$.
    \ENDFOR
    \STATE Let $i^* = \argmin_{i}\Big\{ \sum\limits_{k = 1}^{d}\fnormbig{\offdiag(\tilde{X}_i^T \tilde{A}_k \tilde{X}_i)}^2\Big\}$.
    \RETURN $\tilde X = \tilde{X}_{i^*}$
    \end{algorithmic}
    \end{algorithm}
The considerations of this section lead to Algorithm~\ref{alg:detailed_RSDC}, which can be viewed as multiple iterations of Algorithm \ref{alg:RSDC}.
Note that in Line $2$ of Algorithm \ref{alg:detailed_RSDC}, both $\mu,\theta$ are Gaussian random vectors if we only know that the input family is regular, while we choose $\theta = [1/d,\ldots,1/d]$ if the input family is known to be positive definite.
\subsection{Iterative refinement with FFDIAG}
\label{sec:FFDIAG}

When the noise level is high and the dimension $n$ is large, Algorithm~\ref{alg:detailed_RSDC} alone -- especially with a limited number of trials -- may not provide sufficient accuracy, as indicated by the $\mathcal{O}(n^2\epsilon)$ factor in Theorem~\ref{thm:pd_prob_bound} and also demonstrated by numerical experiments in Section~\ref{sec:numerical_experiments}. To improve the accuracy of Algorithm~\ref{alg:detailed_RSDC} more efficiently instead of merely increasing the number of trials, we propose to combine it with optimization-based algorithms.   Our results from Section~\ref{sec:robust_recovery} provide some indication that RSDC can be expected to deliver decent initial guesses for an optimization method; at least for moderate $n$ and small $\epsilon$ when compared to the trivial initial guess. As we will see in the numerical experiments in Section~\ref{sec:numerical_experiments}, this combined approach can efficiently produce accurate results even for a large dimension $n$ and a high noise level. However, developing a theoretical understanding of how using the output from RSDC as an initial guess improves the recovery result of optimization-based algorithms for SDC remains an open challenge.

The particular optimization algorithm considered in this work is the quasi-Newton method FFDIAG from~\cite{Ziehe03}, which aims at minimizing the off-diagonal error~\eqref{eq:off_diagonal_loss} by multiplicative updates of the form $\tilde X_{i+1} = (I+W)\tilde X_i$ for some carefully chosen $W$;
see~\cite[Algorithm 1]{Ziehe03} for more details. FFDIAG uses the identity matrix as the starting point and, quite remarkably, the need for investigating a smarter initialization is explicitly mentioned in~\cite{Ziehe03}; we believe that RSDC is an excellent candidate. For the implementation of FFDIAG, we follow the library PYBSS\interfootnotelinepenalty=10000\footnote{The library is owned and maintained by Ameya Akkalkotkar and Kevin Brown, available at \url{https://github.com/archimonde1308/pybss}} and further improve the efficiency by vectorizing for loops. The stopping criterion of FFDIAG considered throughout this paper is $\|\tilde X_{i+1} - \tilde X_i\|_F \leq \num{1e-8} $, which is the default used in PYBSS. 

RFFDIAG, the described combination of RSDC with FFDIAG, is summarized in Algorithm \ref{alg:RFFDIAG}. To demonstrate how the randomized initial guess helps FFDIAG, we consider $d=10$ randomly generated SDC matrices of size $100 \times 100$; the matrices are generated in the same way as the noiseless SDC matrices in Section \ref{sec:synthetic_data}.
FFDIAG initialized with the identity matrix requires $47$ iterations to converge. In contrast, when initialized with the output of RDSC, it only requires $1$ iteration.

\begin{algorithm}[H]
\caption{\textbf{R}andomized \textbf{FFDIAG} (RFFDIAG)}
\label{alg:RFFDIAG}
\textbf{Input:} \text{Nearly SDC family $\tilde{\Acal} = (\tilde{A}_1,\ldots,\tilde{A}_d)$, maximum number of iterations $N$.}\\
 \textbf{Output:} \text{Invertible matrix $\tilde{X}$ such that $\tilde{X}^T \tilde A_k \tilde{X}$ is nearly diagonal for $k = 1,\ldots,d$.} \\[-0.5cm]
\begin{algorithmic}[1]
\STATE $\tilde X_0 = \text{RSDC}(\tilde {\mathcal{A}}, 1)$ \emph{\% Calling Algorithm~\ref{alg:detailed_RSDC} with $L=1$.}
\STATE $\tilde X = \text{FFDIAG}(\tilde X_0,  \tilde{\Acal}, N)$ 
\emph{\% Calling FFDIAG with starting point $\tilde X_0$.}
\RETURN $\tilde X$

\end{algorithmic}
\end{algorithm}



\section{Numerical experiments} \label{sec:numerical_experiments}

We have implemented the algorithms described in this paper in Python 3.8; the code is available at \url{https://github.com/haoze12345/rsdc}.
Throughout this section, we use $L=3$ trials to boost the success probability of RSDC when using it as a stand-alone algorithm. The number of maximum iterations of RFFDIAG is set to $10$ because it requires only few iterations to converge, as demonstrated in Section \ref{sec:FFDIAG}. We have found these settings to offer a good compromise between accuracy and efficiency. All experiments were carried out on a Dell XPS 13 2-In-1 with an Intel Core i7-1165G7 CPU and 16GB of RAM. All execution times are reported in milliseconds.

In the following, we demonstrate the performance of RSDC and RFFDIAG for synthetic data, image separation, and electroencephalographic recordings. The numerical experiments are organized to be closer and closer to real-world scenarios. Before delving into these extensive numerical experiments, we provide an overview of the alternative algorithms and their implementations.

For FFDIAG, we use our own optimized implementation, as introduced in Section \ref{sec:FFDIAG}. PHAM~\cite{Pham2001} minimizes the loss~\eqref{eq:pham_loss} by decomposing the diagonalizer into $n(n-1)/2$ invertible elementary transformations and minimizing \eqref{alg:RFFDIAG} successively for each elementary transform. PHAM's implementation is available in~\cite{pyriemann}. Note that PHAM is the method of choice for the Blind Source Separation (BSS) task in~\cite{BARTHELEMY2017371}, which we compare to in Section \ref{sec:eye_blinking}. QNDIAG~\cite{Ablin19}, a quasi-Newton's method,  also minimizes loss \eqref{eq:pham_loss} with an efficient approximation of the Hessian. We use its original implementation in \cite{Ablin19}\interfootnotelinepenalty=10000\footnote{Available at \url{https://github.com/pierreablin/qndiag}}. FFDIAG, PHAM and QNDIAG are compared with our novel randomized algorithms on synthetic data in Section \ref{sec:synthetic_data}. UWEDGE~\cite{uwedge} serves as the preferred SDC solver for image separation tasks in \cite{Pfister2019}; we compare to the original implementation from~\cite{Pfister2019} for the quality of image separation and efficiency in Section \ref{sec:imageseparation}. UWEDGE minimizes the loss \eqref{eq:off_diagonal_loss} iteratively as follows: Given the current estimated diagonalizer $\tilde X_i$, compute the best ``mixing" matrix $\tilde V_i$ such that $\tilde V_i\diag(\tilde X_i^TA_k\tilde X_i)\tilde V_i^T$ is as close to $\tilde X_i^TA_k \tilde X_i, i = 1,\ldots,d$ as possible and set $\tilde X_{i+1} = \tilde X_i \tilde V_i^{-T}$.  For all the alternative algorithms, we use the identity as the initial values and keep all parameters, such as stopping criteria, default for all numerical experiments.

\subsection{Synthetic data}
\label{sec:synthetic_data}
In this section, we compare our algorithms with FFDIAG, PHAM and QNDIAG on synthetic data. 

For this experiment, synthetic nearly SDC families $\tilde \Acal = \{\tilde A_k = VD_kV^T + \epsilon E_k \in \reals^{n \times n}\}_{k=1}^{d}$ have been generated as follows. The matrix $V$ is fairly well-conditioned, obtained from normalizing the columns of a Gaussian random matrix. Each diagonal entry of $D_k$ is the absolute value of an i.i.d. standard normal random variable, shifted by $0.01$ to ensure sufficiently strong positivity. We consider three different sizes $(d,n) = (10,10), (100,10), (10,100)$ and three different noise levels $\epsilon_1 = 0$, $\epsilon_2 = 10 ^{-6}$, and $\epsilon_3 = 10^{-3}$. The perturbation directions $E_k$ are Gaussian random matrices normalized such that $\sum_{k=1}^{d}\fnorm{E_k}^2 = 1$.  
Because the positive definiteness of each matrix is assumed by QNDIAG and PHAM, we enforce PD by repeatedly generating $E_k$ until all matrices $\tilde A_k$ are positive definite. We always scale the columns of the output $\tilde X$ to have norm $1$ so that the output error is comparable among different algorithms. The obtained results are shown in the Table \ref{table:RSDC_10_10}--\ref{table:RSDC_10_100}. The execution times and errors for each setting are averaged over 100 runs with the same family of matrices. We have repeated the same experiments for several randomly generated nearly SDC families (with the same settings) to verify that the results shown in Table \ref{table:RSDC_10_10}--\ref{table:RSDC_10_100} are representative. 

\begin{table}[!hbt!]
\begin{center}
\caption{Comparison for synthetic data ($d=10, n=10$)}
\label{table:RSDC_10_10}
\begin{tabular}{||c|S[table-format=2.2]|c|S[table-format=2.3]|c|S[table-format=2.2]|c||}
    \hline
    Name & {Time $\epsilon_1$} & Error $\epsilon_1$ & {Time $\epsilon_2$} & Error $\epsilon_2$ &{Time $\epsilon_3$} &Error $\epsilon_3$\\   
    \hline
FFDIAG & 1.53 & $\num{4.14e-11}$ & 1.88 & $\num{9.11e-07}$ & 1.27 & $\num{9.26e-4}$\\

PHAM & 27.75 & $\num{2.38e-09}$ & 26.06 & $\num{1.16e-06}$ & 26.97 & $\num{1.13e-3}$\\

QNDIAG & 1.30 & $\num{3.32e-09}$ & 1.30 & $\num{1.16e-06}$ & 1.10 & $\num{1.13e-3}$\\

RSDC & 0.42 & $\num{7.06e-15}$ & 0.32 & $\num{4.41e-06}$ & 0.50 & $\num{4.57e-3}$\\

RFFDIAG & 0.24 & $\num{3.42e-16}$ & 0.45 & $\num{9.11e-07}$ & 0.48 & $\num{9.26e-4}$\\
    \hline
    \end{tabular}
\end{center}
\end{table}

\begin{table}[!hbt!]
\begin{center}
\caption{Comparison for synthetic data ($d=100, n=10$)}
\label{table:RSDC_100_10}
\begin{tabular}{||c|c|c|c|c|c|c||}
    \hline
    Name & Time $\epsilon_1$ & Error $\epsilon_1$ & Time $\epsilon_2$ & Error $\epsilon_2$ &Time $\epsilon_3$ &Error $\epsilon_3$\\   
    \hline
FFDIAG & 5.05 & $\num{7.79e-11}$ & 4.70 & $\num{1.14e-06}$ & 5.84 & $\num{1.14e-3}$\\

PHAM & 42.28 & $\num{1.42e-13}$ & 33.97 & $\num{1.18e-06}$ & 40.17 & $\num{1.15e-3}$\\

QNDIAG & 3.61 & $\num{1.98e-08}$ & 4.02 & $\num{1.18e-06}$ & 3.74 & $\num{1.15e-3}$\\

RSDC & 0.71 & $\num{2.31e-14}$ & 0.63 & $\num{5.05e-06}$ & 0.54 & $\num{5.29e-3}$\\

RFFDIAG & 0.70 & $\num{1.56e-15}$ & 0.63 & $\num{1.14e-06}$ & 1.41 & $\num{1.14e-3}$\\
    
    \hline
    \end{tabular}
\end{center}
\end{table}

\begin{table}[!hbt!]
\begin{center}
\caption{Comparison for synthetic data ($d=10, n=100$)}
\label{table:RSDC_10_100}
\begin{tabular}{||c|S[table-format=4.2]|c|S[table-format=4.2]|c|S[table-format=4.2]|c||}
    \hline
    Name & {Time $\epsilon_1$} & Error $\epsilon_1$ & {Time $\epsilon_2$} & Error $\epsilon_2$ &{Time $\epsilon_3$} &Error $\epsilon_3$\\
    \hline
FFDIAG & 120.48 & $\num{4.99e-11}$ & 126.93 & $\num{1.08e-06}$ & 146.54 & $\num{9.61e-4}$\\

PHAM & 6930.17 & $\num{1.08e-07}$ & 6967.46 & $\num{1.37e-06}$ & 8161.35 & $\num{1.19e-3}$\\

QNDIAG & 123.05 & $\num{2.21e-13}$ & 586.69 & $\num{1.36e-06}$ & 470.17 & $\num{1.20e-3}$\\

RSDC & 47.07 & $\num{1.27e-13}$ & 54.39 & $\num{5.38e-05}$ & 62.89 & $\num{1.08e-2}$\\

RFFDIAG & 23.44 & $\num{1.14e-15}$ & 31.42 & $\num{1.08e-06}$ & 64.22 & $\num{9.94e-4}$\\
    
    \hline
    \end{tabular}
\end{center}
\end{table}

We have also tested the algorithms on relatively ill-conditioned matrices. For this purpose, we set $\epsilon = 0$, $d = 30$, and $n = 20$. The matrix $V$ is generated as described above, while the diagonal entries of $D_k$ are a random permutation of the vector \[\begin{bmatrix} 10^{0} & 10^{8/(n-1)} &  10^{16/(n-1)} & \ldots &  10^{8}\end{bmatrix}^T.\]  The results are shown in Table \ref{table:ill_conditioned_rsdc}. 
\begin{table}[!hbt!]
\begin{center}
\caption{Comparison for synthetic, ill-conditoned matrices}
\label{table:ill_conditioned_rsdc}
\begin{tabular}{||c|S[table-format=2.1]|c||}
    \hline
    Name & {Time} & {Error} \\
    \hline
    FFDIAG & 4.58 & $\num{9.24e-12}$\\

    PHAM & 799.21 & $\num{5.66e-1}$\\
    
    QNDIAG & 38.77 & $\num{1.74e-11}$\\
    
    RSDC & 2.85 & $\num{3.44e-14}$\\
    
    RFFDIAG & 0.99 & $\num{1.03e-15}$\\
    
    \hline
\end{tabular}
\end{center}
\end{table}

From all the above experiments on synthetic data, we can conclude that RFFDIAG is significantly more efficient than PHAM and QNDIAG, while reaching a level of accuracy that is comparable or better. PHAM and QNDIAG struggle to obtain good accuracy for ill-conditioned matrices, while they pose no problem for RFFDIAG. Among the algorithms considered in this paper, RFFDIAG offers the best compromise between accuracy and efficiency.

The numerical results reported in Tables \ref{table:RSDC_10_10} -- \ref{table:RSDC_10_100} show that the error of RSDC increases with $n$. This growth with $n$ is also reflected by the constants in our theoretical results (Theorem \ref{thm:npd_prob_bound} and \ref{thm:pd_prob_bound}), as well as the sensitivity analysis performed in~\cite[Eqn. (41)]{Afsari08} and \cite[Theorem 1.4]{Pencilbased}. Table \ref{table:RSDC_10_100} indicates that this puts RSDC at a disadvantage when the input error and the dimension $n$ become larger and this makes it necessary to refine the result of RSDC by a subsequent optimization, as done by RFFDIAG.

\subsection{Application: Blind Source Separation}

In Blind Source Separation (BSS), the observed $n$ signals $x_i(t)$, $i=1,\ldots,n$, are assumed to be a linear mixture of $n$ source signals
$s_j(t)$:
\[x_i(t) = \sum_{j = 1}^{n} A_{ij}s_j(t), \text{or } x(t) = As(t)\]
with some (unknown) non-singular matrix $A \in \reals^{n \times n}$.  The source signals $s_1(t), \ldots, s_n(t)$ are assumed to be jointly stationary random processes, that there is at most one Gaussian source, and that for each $t$, the signals $s_1(t),\ldots, s_n(t)$ are mutually independent random variables. The task of BSS is to estimate a matrix $B\in \R^{n\times n}$ only from the observed signals such that the unmixed signals $(Bx)_j(t)$ is a scalar multiple of some source signal $s_j(t)$. Under these assumptions, it is easy to see that the covariance matrix $\mathbb{E}[xx^T] = A\mathbb{E}[ss^T]A^T $ is diagonalized by congruence via $A^{-T}$ as $s_1(t),\ldots, s_n(t)$ are mutually independent.  Other second-order statistics, e.g., time-lagged covariance matrices \cite{Belouchrani1997}, Fourier cospectra~\cite{BARTHELEMY2017371}, or even covariance matrices of different signal segments~\cite{Pham2001IEEE},  of the observed signals $x_i(t)$ share the same simultaneous diagonalizer via congruence as the covariance matrix. Therefore, the unmixing matrix $B$ can be estimated by performing SDC on those second-order statistics. See, e.g., \cite{BSSsurvey} for an overview of BSS and other SDC families given the observed signals. The experiments conducted in the following two subsections are two real-world examples of BSS.

\subsection{Real data: image separation} \label{sec:imageseparation}

We perform an image separation experiment, following Example 1 from~\cite{Pfister2019}. The source signals in this example are vectorized images of landscapes. We mixed the same $4$ photos of landscapes as in \cite{Pfister2019} with a random Gaussian mixing matrix $A \in \reals^{4 \times 4}$.

We then compute the covariance matrices of different signal segments of the observed mixed signals, which results in $1350$ matrices of size $4 \times 4$  to be jointly diagonalized. We apply RSDC and RFFDIAG to this family of nearly SDC matrices and compare them with UWEDGE, which is the solver used in~\cite{Pfister2019}. The obtained results are shown in Figure \ref{fig:image_separartion}. The execution time, averaged over 100 runs for the same mixing matrix $A$, of each algorithm is reported in Table~\ref{table:image_separartion_runningtime}. In this experiment, only the execution times of the SDC part are measured. 
Visually, the separation achieved by RSDC is poor. In contrast, both UWEDGE and RFFDIAG lead to almost perfect reconstruction of the original images. Our new algorithm RFFDIAG is nearly two times faster than UWEDGE.
\begin{figure}[ht]
    \centering
    \includegraphics[width = \linewidth]{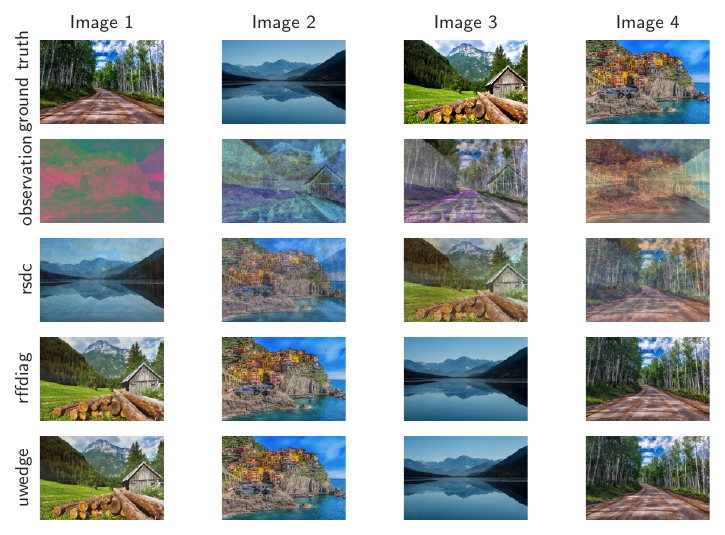}
    \caption{Results for image separation example from Section~\ref{sec:imageseparation}. The first row shows the original images, the second row the mixed images, and the last 3 rows show the unmixed images obtained by RSDC, RFFDIAG, and UWEDGE.}
    \label{fig:image_separartion}
\end{figure}
\begin{table}[!hbt!]
\begin{center}
\caption{Average running time in image separation}
\label{table:image_separartion_runningtime}
\begin{tabular}{||c | S[table-format = 2.2] | |}
\hline
Algorithm name & {Avg running time(ms)}\\
\hline
RSDC & 2.32\\
\hline
RFFDIAG & 23.39\\
\hline
UWEDGE & 46.14\\
\hline

\end{tabular}
\end{center}
\end{table}

\subsection{Real data: electroencephalographic recordings}
\label{sec:eye_blinking}

Finally, we test our randomized SDC algorithms on real electroencephalographic (EEG) recordings. In Human EEG, the signals observed at the different electrodes on the scalp are approximately linear mixtures of the source signals in the brain~\cite[Chapter 8.2]{Congedo2014}. Specifically, in~\cite{BARTHELEMY2017371} it is suggested that the eye-blinking noise signal can be separated by performing SDC on the Fourier cospectra of the EEG recording. This leads to a family of $d=33$ matrices of size $17\times 17$. We follow the implementation in~\cite{pyriemann},
where the involved SDC solver is PHAM. The power spectrum and the topographic map of the estimated blink source signal with this default choice are shown in Figure \ref{fig:pham_blink_source}. We can see that, from the topographic map of the source signal on the scalp, the source signal is indeed concentrated among the eyes, which confirms that this signal corresponds to eye-blink. 

We applied RSDC and RFFDIAG to the same data and the results are shown in Figures~\ref{fig:rsdc_blink_source} and~\ref{fig:rffdiag_blink_source} respectively. From the figures, we can see that both the algorithms identify the blink signals successfully and their power spectra are also close to the one estimated by the default solver. However, we can observe from the topographic map that the result of RSDC is more noisy.
\begin{figure}
    \centering
    \includegraphics[width = \linewidth]{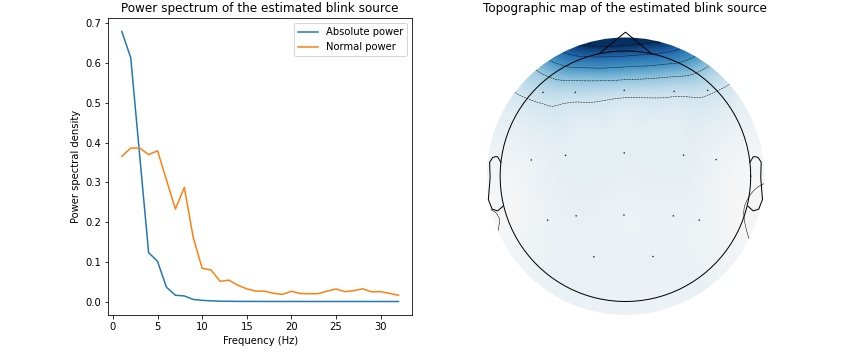}
    \caption{Power spectrum and topographic map of the blink source by PHAM}
    \label{fig:pham_blink_source}
\end{figure}

\begin{figure}
    \centering
    \includegraphics[width = \linewidth]{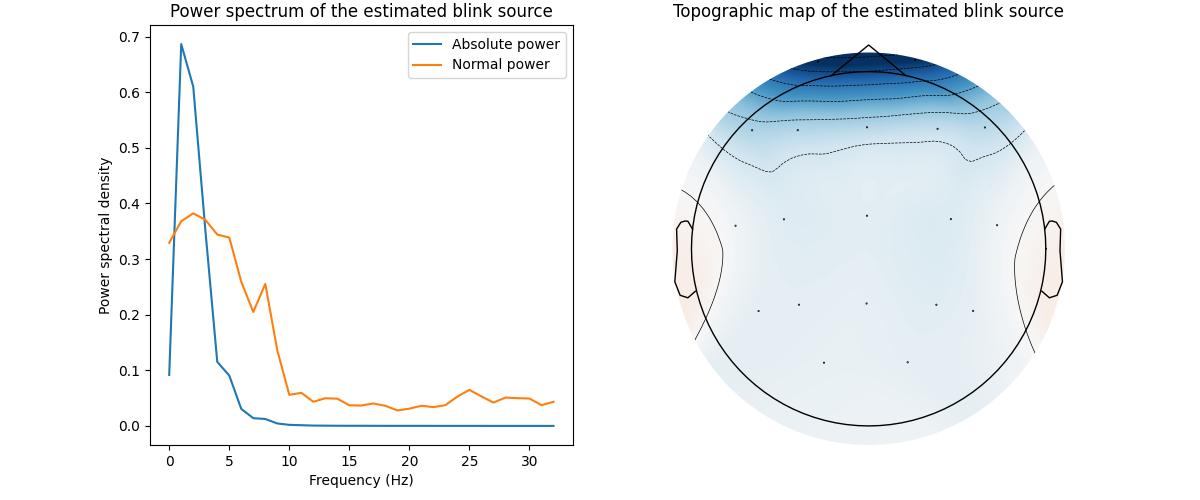}
    \caption{Power spectrum and topographic map of the blink source by RSDC}
    \label{fig:rsdc_blink_source}
\end{figure}

\begin{figure}
    \centering
    \includegraphics[width = \linewidth]{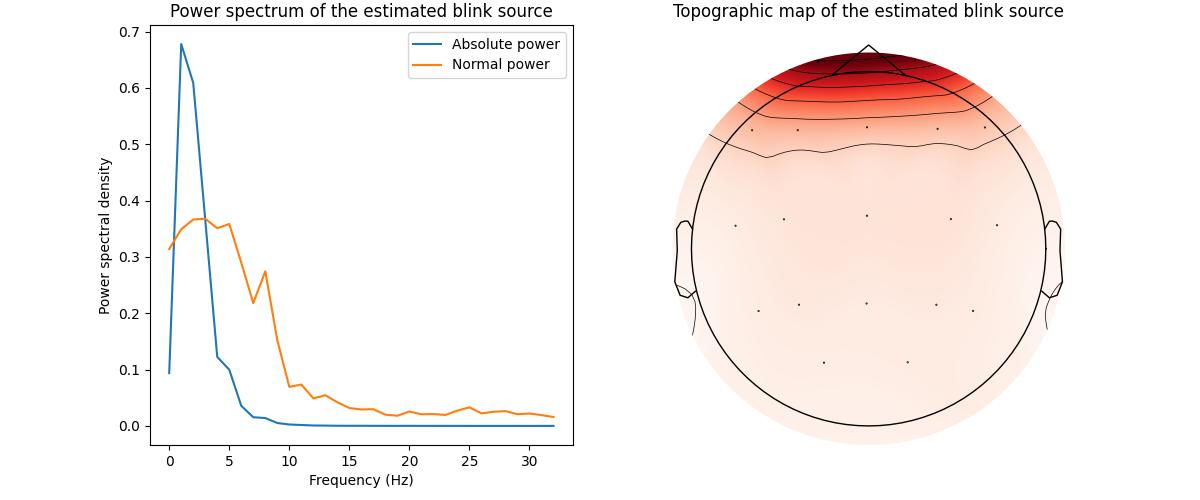}
    \caption{Power spectrum and topographic map of the blink source by RFFDIAG}
    \label{fig:rffdiag_blink_source}
\end{figure}

Next, we compare the execution time of different SDC algorithms involved in this BSS task, averaged over $100$ repeated runs; see Table \ref{table:eeg_runningtime}.  In particular, RFFDIAG yields significantly faster performance than PHAM, while achieving a comparable level of accuracy.

\begin{table}[!hbt!]
\begin{center}
\caption{Average running time in eye-blind signal separation}
\label{table:eeg_runningtime}
\begin{tabular}{||c | S[table-format = 3.2] | |}
\hline
Algorithm name & {Avg running time(ms)}\\
\hline
PHAM & 275.49\\
\hline
RSDC & 1.42\\
\hline
RFFDIAG & 2.69\\
\hline
\end{tabular}
\end{center}
\end{table}

\section{Conclusions}

In this paper, we have proposed and analyzed RSDC, a novel randomized algorithm for performing (approximate) simultaneous diagonalization by congruence. Our numerical experiments show that this algorithm is best used in combination with an optimization method that uses the output of RSDC as starting point. The resulting algorithm, RFFDIAG, appears to offer a good compromise between efficiency and accuracy, outperforming existing solvers; sometimes by a large margin. While empirical results are promising, a rigorous theoretical analysis of RFFDIAG's robustness remains an important direction for future research.

\begin{paragraph}{Acknowledgments.}
The authors thank the referees and the editor for helpful remarks, which
improved the presentation of this manuscript. The authors also thank Nela Bosner, University of Zagreb, for inspiring discussions related to this work.
\end{paragraph}

\bibliography{mybib.bib}
\bibliographystyle{abbrv}

\end{document}